\documentclass[bezier,amstex, oneside,reqno]{amsart}
\usepackage{amsmath, amssymb, amsthm, verbatim, euscript, amscd, textcomp, enumerate}
\usepackage[dvips]{graphicx}
 \usepackage[all,cmtip]{xy}
\usepackage{epsfig}
\usepackage{color}


\def\ie{\emph{i.e., }}
\def\eg{\emph{e.g., }}
\def\B{\mathcal B}
\def\G{\mathcal G}

\def\S{\mathbb S}
\def\DD{\mathbb D}

\def\R{\mathbb R}
\def\Z{\mathbb Z}
\def\Q{\mathbb Q}
\def\U{\mathcal U}
\def\V{\mathcal V}

\def\C{\mathbb C}
\def\T{\mathbb T}

\def\E{\EuScript E}

\def\A{\mathcal A}

\def\e{\varepsilon}



\newtheorem{theorem}{Theorem}[section]
\newtheorem{prop}[theorem]{Proposition}
\newtheorem{conj}[theorem]{Conjecture}
\newtheorem{q}[theorem]{Question}
\newtheorem{add}[theorem]{Addendum}
\newtheorem{claim}[theorem]{Claim}

\newtheorem{cor}[theorem]{Corollary}
\newtheorem{lemma}[theorem]{Lemma}
\newtheorem{n}[theorem]{}
 \theoremstyle{remark}
\newtheorem{remark}[theorem]{Remark}

\theoremstyle{remark}

\makeatletter
\renewcommand*\env@matrix[1][*\c@MaxMatrixCols c]{%
  \hskip -\arraycolsep
  \let\@ifnextchar\new@ifnextchar
  \array{#1}}
\makeatother

\begin{document}
\author{Andrey Gogolev$^\ast$, Pedro Ontaneda$^{\ast\ast}$ and Federico Rodriguez Hertz$^{\ast\ast\ast}$}
\title[Partially hyperbolic diffeomorphisms]{New partially hyperbolic dynamical systems I}
\thanks{$^\ast$The first author was partially supported by NSF grant DMS-1266282. He also would like to acknowledge excellent working environment provided by the Institute for Math at Stony Brook University.\\
$^{\ast\ast}$ The second author  was partially supported by NSF grant DMS-1206622.\\
$^{\ast\ast\ast}$The last author  was partially supported by NSF grant DMS-1201326.
}
\begin{abstract}
We propose a new method for constructing partially hyperbolic diffeomorphisms on closed manifolds. As a demonstration of the method we show that there are simply connected closed manifolds that support partially hyperbolic diffeomorphisms. { Laying aside many surgery constructions of Anosov flows (mostly in dimension three), these are the first new examples of manifolds which admit partially hyperbolic diffeomorphisms in the past 40 years.}
\end{abstract}
\date{}
 \maketitle

\section{Introduction}

Let $M$ be a  smooth compact $d$-dimensional manifold.
A diffeomorphism $F$ is called {\it Anosov}  if there
exist a constant $\lambda>1$ and a Riemannian metric along with a
$DF$-invariant splitting $TM=E^s\oplus E^u$ of the tangent bundle of
$M$, such that for any unit vectors, $v^s$ and $v^u$ in $E^s$ and $E^u$, respectively, we have 
\[\arraycolsep=1.4pt\def\arraystretch{1.6}
\begin{array}{rcccl}
& &\|DF(v^s)\|&\le& \lambda^{-1}\;\;\;\\
\lambda&\le &\|DF(v^u)\|&
\end{array}
\]
All known examples of Anosov diffeomorphisms are supported on manifolds which are homeomorphic to infranilmanifolds. The classification problem for Anosov diffeomorphisms is an outstanding open problem that goes back to Anosov and Smale. The great success of the theory of Anosov diffeomorphisms (and flows)~\cite{A}  motivated Hirsch-Pugh-Shub~\cite{HPS2, HPS} and Brin-Pesin~\cite{BP} to relax the definition as follows.

A diffeomorphism $F$ is called {\it partially hyperbolic} if there
exist a constant $\lambda>1$ and a Riemannian metric along with a
$DF$-invariant splitting $TM=E^s\oplus E^c\oplus E^u$ of the tangent bundle of
$M$, such that for any unit vectors, $v^s,v^c, v^u$ in $E^s, E^c, E^u$, respectively, we have 
\[\arraycolsep=1.4pt\def\arraystretch{1.6}
\begin{array}{rcccl}
& &\|DF(v^s)\|&\le& \lambda^{-1}\;\;\;\\
\|DF(v^s)\|&<& \|DF(v^c)\|&<&\|DF(v^u)\|  \\
\lambda&\le &\|DF(v^u)\|&
\end{array}
\]
In recent years the dynamics of partially hyperbolic diffeomorphisms has been a popular subject, see, \eg~\cite{PS, RHRHU}. The pool of examples of partially hyperbolic diffeomorphisms is larger than that of Anosov diffeomorphisms, in particular, due to the fact that extensions (\eg $F\times id_N$) of partially hyperbolic diffeomorphisms are partially hyperbolic. However, the collection of basic ``building blocks" for partially hyperbolic diffeomorphisms is still rather limited. Up to homotopy, all previously known examples of irreducible\footnote{See Section~\ref{sec_irreducible} for our definition of ``irreducible"} partially hyperbolic diffeomorphisms are either affine diffeomorphisms on homogeneous spaces or time-1 maps of Anosov flows. The affine examples go back to Brin-Pesin~\cite{BP} and Sacksteder~\cite{S}.
\begin{theorem}[Main Theorem]\label{theorem_main}
 For any $d\ge 6$ there exist a closed $d$-dimensional simply connected manifold $M$ that supports a volume preserving partially hyperbolic diffeomorphism $F\colon M\to M$. Moreover, $F$ is ergodic with respect to volume.
\end{theorem}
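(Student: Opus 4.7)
The plan is to construct $(M, F)$ by a surgery on a standard partially hyperbolic product diffeomorphism, killing the fundamental group while extending the dynamics so that partial hyperbolicity is preserved. As a starting point I take $F_0 = A \times \mathrm{Id}_N$ on $M_0 = \T^n \times N$, where $A$ is a hyperbolic toral automorphism and $N$ is a closed simply connected manifold with $n + \dim N = d$. Then $F_0$ is partially hyperbolic with $E^c = TN$, volume preserving, and accessible, but $\pi_1(M_0) \cong \Z^n$ is nontrivial.

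The main topological step is to kill $\pi_1(M_0)$ by a surgery along loops representing the generators of $\Z^n$ (attaching $2$-handles and then the higher-dimensional handles required to complete a simply connected manifold). Since $d \geq 6$, the attaching circles have codimension at least $4$, which provides enough room both to position each handle inside a small box where $F_0$ has a simple local form, and to perform the handle-trading standard in high-dimensional surgery to eliminate all generators of $\pi_1$ without creating new ones. This produces a simply connected closed $d$-manifold $M$ together with a partially defined diffeomorphism inherited from $F_0$ on the complement of the surgery regions.

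The principal obstacle is extending $F_0$ over the attached handles to a globally defined partially hyperbolic diffeomorphism $F\colon M\to M$. The approach is to specify $F$ on each handle by an explicit local model that (i) agrees with $F_0$ on the attaching region, (ii) extends the $E^s \oplus E^c \oplus E^u$ splitting, and (iii) satisfies the required expansion and contraction inequalities. Verification goes through the invariant cone-field criterion: continue the product cone fields across each handle and check strict $DF$-invariance together with the requisite norm bounds, possibly after smoothing and a small adjustment of the Riemannian metric. The rigidity of the cone and rate conditions, as opposed to the flexibility of the topological surgery, is what makes this step the delicate core of the argument.

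Finally, arrange the local models to be volume preserving so that $F$ preserves a smooth volume form. For ergodicity, I would check that the accessibility property of the stable and unstable foliations, inherited from $F_0$, survives the surgery (or is restored by a small perturbation supported in the handles), and then invoke the Burns--Wilkinson criterion to conclude volume ergodicity of the resulting accessible, center-bunched, volume-preserving partially hyperbolic diffeomorphism.
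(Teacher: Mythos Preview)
Your proposal has a genuine gap at precisely the point you flag as ``the delicate core of the argument,'' and in fact the surgery strategy cannot work as described.

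First, the surgery is not equivariant. To attach a $2$-handle you must choose an embedded loop $\gamma \subset \T^n \times N$ representing a generator of $\pi_1(\T^n)$, together with a tubular neighborhood on which you perform the surgery. For $F$ to be defined on the surgered manifold you need $F_0$ to carry this neighborhood to itself (or to another surgery region). But $A$ is a hyperbolic automorphism of $\T^n$: it has dense orbits and no small invariant open set, and it does not preserve any circle representing a generator of $\pi_1$. So $F_0$ cannot match up with itself across the attaching region, and there is no ``partially defined diffeomorphism inherited from $F_0$'' to extend. The phrase ``position each handle inside a small box where $F_0$ has a simple local form'' does not help: the local form of $F_0$ on any small box is a hyperbolic linear map, which moves the box off itself.

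Second, even setting equivariance aside, you give no mechanism for extending the $E^s \oplus E^u$ splitting over a handle. The invariant cone-field criterion certifies partial hyperbolicity of a \emph{given} diffeomorphism; it does not manufacture one. You would need an explicit model diffeomorphism of $D^2 \times D^{d-2}$ that is hyperbolic in the normal directions and glues to $A \times \mathrm{Id}$ on the boundary, but the boundary data already fails to be invariant. No such local model is known, and constructing partially hyperbolic diffeomorphisms by surgery in this way is an open problem rather than a technique.

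The paper's route is entirely different and sidesteps these issues. It takes a simply connected \emph{base} $X$ (the Kummer surface) and a principal $\T^k$-bundle $E_h \to X$ whose total space is simply connected because the connecting map $\pi_2(X) \to \pi_1(\T^k)$ is onto. The diffeomorphism $F$ is built as an $A$-map covering a carefully controlled diffeomorphism $f\colon X\to X$: the hyperbolicity lives in the torus fiber (via the hyperbolic matrix $A$), and the base map $f$ is perturbed so that its derivative is dominated by the fiberwise rates. Thus the simple connectivity comes from bundle topology, not surgery, and the partial hyperbolicity is produced globally by a dominated-splitting argument (Lemma~2.18 of Hirsch--Pugh--Shub) rather than by local handle models.
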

\begin{remark}
There are no previously known examples of partially hyperbolic diffeomorphisms on simply connected manifolds. It is easy to show that simply connected compact Lie groups do not admit partially hyperbolic automorphisms (use, \eg~\cite[Theorems~6.61, 6.63]{HS13}). However, to the best of our knowledge, the possibility that some simply connected manifolds support Anosov flows is open.
\end{remark}
Burago and Ivanov proved that simply connected 3-manifolds (\ie the sphere $\mathbb S^3$) do not support partially hyperbolic diffeomorphisms~\cite{BI}. Simply connected 4-manifolds have non-zero Euler characteristic and hence do not admit line fields. Consequently simply connected 4-manifolds do not support partially hyperbolic diffeomorphisms. 
\begin{q}
 Do simply connected 5-manifolds support partially hyperbolic diffeomorphisms?
\end{q}
\begin{remark}
{
It is easy to see, for topological reasons, that the 5-sphere $\mathbb S^5$ does not admit partially hyperbolic diffeomorphisms. Indeed, the splitting $T\mathbb S^5=E^s\oplus E^c\oplus E^u$ is either a 3-1-1 or a 2-2-1 splitting. By an old result of Eckmann~\cite{Eck} (see also Whitehead~\cite{Wh}) the sphere $\mathbb S^5$ does not admit two linearly independent vector fields and, therefore, the splitting must be a 2-2-1 splitting.  Because the structure group $GL^+(2,\R)$ retracts to $O(2,\R)$, the 2-plane bundles over $\mathbb S^5$ are in one-to-one correspondence with circle bundles, which are classified by homotopy classes of maps $\mathbb S^4\to\textup{Diff}(S^1)$ by the clutching construction. But $\textup{Diff}(S^1)$ is homotopy equivalent to $S^1$ and, hence, all 2-plane bundles over $\mathbb S^5$ are trivial. Therefore the existence of a 2-2-1 splitting of $T\mathbb S^5$ implies that $\mathbb S^5$ is parallelizable, which is a contradiction. It is an interesting open problem to decide whether $\mathbb S^7$ supports partially hyperbolic diffeomorphisms.
}
\end{remark}

In the next section we briefly (and very informally) outline our approach. Then we proceed with a detailed discussion leading to the proof of the Main Theorem in Section~\ref{sec_proof}.
The authors would like to thank the referee for his/her careful reading.

\section{Informal description of the construction}

Our approach is to consider a smooth fiber bundle $M\to E\stackrel{p}{\to} X$, whose base $X$ is a closed manifold and whose fiber $M$ is  a closed manifold which admits a partially hyperbolic diffeomorphism. The idea now is to equip the total space $E$ with a fiberwise partially hyperbolic diffeomorphism $F\colon E\to E$, which fibers over a diffeomorphism $f\colon X\to X$, \ie the following diagram commutes
$$
\xymatrix{
E\ar_p[d]\ar^F[r] & E\ar_p[d]\\
X\ar^f[r] & X
}
$$
Then the diffeomorphism $F$ is partially hyperbolic provided that $f$ is dominated by the action (on extremal subbundles) of $F$ along the fibers. However, for non-trivial fiber bundles the bundle map $p\colon E\to X$ intertwines the dynamics in the fiber with dynamics in the base, which makes it difficult to satisfy
\renewcommand\labelenumi{\theenumi.}
\begin{enumerate}
\item $F$ is fiberwise partially hyperbolic;
\item $f$ is dominated by $F$;
\end{enumerate}
at the same time. In particular, if $X$ is simply connected and $f$ is homotopic to $id_X$ such constructions seem to be out of reach (cf.~\cite[Question~6.5]{FG}). Moreover, assuming that $f=id_X$, it was shown in~\cite{FG} that such construction is, in fact, impossible in certain more restrictive setups. However, in this paper, we show that if $f^*\colon H^*(X)\to H^*(X)$ is allowed to be non-trivial then  our method works in the setup of principal torus bundles over simply connected 4-manifolds.

\section{Preliminaries on principal bundles}
\label{sec_sec2}
\label{sec_prelim_bundles}
In this section we review some of the concepts and facts
about principal fiber bundles that will be needed later. For more
details consult~\cite{Hus}.

{\bf Standing assumption.} {\it In this and further sections we will always assume that all topological spaces are connected countable CW complexes. { Given a space $X$ we will write $H_*(X; A)$ and $H^*(X; A)$ for its homology and cohomology groups with coefficients in an abelian group $A$. If we abbreviate to $H_*(X)$ and $H^*(X)$ then we assume that the coefficient group is $\Z$. }}

Let $X$ be a space and $G$ be a topological group. Recall that a (locally trivial)
principal $G$-bundle $\pi\colon E\rightarrow X$ is a (locally trivial)
fiber bundle with fiber $G$ and structure group $G$, where
(the group) $G$ acts on (the fiber) $G$ by left multiplication.
Let $\phi_\alpha\colon U_\alpha\times G\rightarrow \pi^{-1}U_\alpha$, $\alpha\in\A$,
 be a complete collection of trivializing
charts of the principal bundle $E$. Denote by $U_{\alpha \beta}$ the intersection $U_{\alpha}\cap U_{ \beta}$, $\alpha,\beta\in\A$.
Also define $\phi_{\alpha \beta}\colon U_{\alpha \beta}\rightarrow G$ in the following way
 $$(\phi_{ \beta}^{-1}\circ\phi_{\alpha})(x,g)=
(x,\phi_{\alpha \beta}(x)\cdot g),\;\;\; x\in U_{\alpha \beta}\neq\varnothing, \; g\in G.
$$
 This  collection of {\it transition functions} $\{\phi_{\alpha \beta}\}_{U_{\alpha \beta}\neq\varnothing}$ satisfies the following {\it cocycle condition} 
$$
\phi_{\alpha \beta}(x)\cdot\phi_{ \beta\gamma}(x)\cdot\phi_{\gamma\alpha}(x)=e, \;\;\; x\in U_{\alpha}\cap U_\beta\cap U_\gamma,
$$
 where $e$ is the identity in $G$.

Conversely, let $\{ \phi_{\alpha \beta} \}_{U_{\alpha\beta}\neq\varnothing}$ be a {\it cocycle of transition functions over a covering $\{ U_\alpha\}$}; that is, assume that we have

\begin{enumerate}
\item an open covering $\{ U_\alpha\}$ of the space $X$;
\item
a collection of maps $\phi_{\alpha \beta}\colon U_{\alpha \beta}\rightarrow G$,
$U_{\alpha \beta}\neq\varnothing$, that satisfy the cocycle condition. 
\end{enumerate}

Then we can construct a principal $G$-bundle $E\rightarrow  X$
by gluing the spaces $U_\alpha\times G$ using the transition functions
$\{\phi_{\alpha \beta}\}$. The cocycle condition
ensures that the gluings are consistent.

We will need the following facts:

\begin{enumerate}[({\ref{sec_sec2}.}1)]
 
\item \label{fact21} Every principal $G$-bundle $E\rightarrow X$ has a (right) action $E\times G\rightarrow E$. This action is free
and the orbits are exactly the fibers. This can be seen from
the construction of $E$ using a cocycle of transition functions:
define $\phi_\alpha (x,g). h=\phi_\alpha(x,g.h)$. This is
well defined because right and left translations on $G$
commute. (There are other equivalent ways of defining
principal bundles. In some of them the action is
included in the definition.)

\item { Two $G$-bundles $\pi\colon E\to X$ and $\pi'\colon E'\to X$ are called {\it equivalent} and we write $E\cong E'$ if there exists a homeomorphism $f\colon E\to E'$ which fits into the commutative diagram
$$
\xymatrix{
E \ar_{\pi}[rd]\ar^f[rr] & & E' \\
 & X\ar_{\pi'}[ru] &
}
$$
and commutes with the $G$-action, \ie $f(y.g)=f(y).g$.
}
\item From~(\ref{sec_sec2}.\ref{fact21})  we get a canonical (up to right
translation) way of identifying a fiber of a principal $G$-bundle
with $G$.

\item \label{fact23} For every $G$ there is a principal $G$-bundle
$EG\rightarrow BG$ such that for any space $X$ and any principal $G$-bundle $E\rightarrow X$,  there is a unique, up to homotopy, map $\rho\colon X\rightarrow BG$,
such that $E\cong \rho^*EG$. { We say that map $\rho$ {\it classifies} the bundle $E\rightarrow X$. } The space $BG$ is called {\it the classifying
space} of $G$, and the $G$-bundle $EG\rightarrow BG$ is called {\it the universal
principal $G$-bundle.}

\item The classifying space of the topological group $\S^1$ is $\C P^\infty=\cup _{n\ge 0} \C P^n$. The universal
principal $\S^1$-bundle is $E\S^1=\S^\infty\rightarrow
\C P^\infty$. Here $\S^\infty=\cup_{n\ge 0}\S^n$.
This bundle is the limit of $\S^1\rightarrow\S^{2n-1}\rightarrow
\C P^n$, where $\S^1\subset\C$ acts on $\S^{2n-1}\subset\C^n$
by scalar multiplication.

\item \label{fact25}  We have  $B(G\times H)=BG\times BH$, provided that both $BG$ and $BH$ are countable $CW$ complexes. 
Moreover, $E(G\times H)=EG\times EH$ and the action and
projections respect the product structure.
It follows that $B\T^k=\underbrace{\C P^\infty\times...\C P^\infty}_k$, where $\T^k=\underbrace{\S^1\times...\times\S^1}_k$ is the $k$-torus, and $E\T^k=\underbrace{\S^\infty\times...\times \S^\infty}_k$.
\end{enumerate}
\renewcommand\labelenumi{\theenumi.}

\section{$A(E)$ construction}

Let $\pi_1\colon E_1\rightarrow X_1$ and $\pi_2\colon E_2\rightarrow X_2$ be principal
$G$-bundles. A fiber preserving map $F\colon E_1\rightarrow E_2$,
covering $f\colon X_1\rightarrow X_2$ (i.e., $f\circ \pi_1=\pi_2\circ F$)
is a {\it principal $G$-bundle map} if $F$ commutes with the right
action of $G$, that is, $F(y.g)=F(y).g$, $y\in E_1$, $g\in G$.
Hence $F$ restricted to a fiber is a left translation.

More generally, let $A\colon G\rightarrow G$ be an automorphism
of the topological group $G$ and let $E_1$, $E_2$ be as above.
We say that a map $F\colon E_1\rightarrow E_2$,
covering $f\colon X_1\rightarrow X_2$ is an {\it $A$-bundle map}
(or simply an {\it $A$-map}) if $F(y.g)=F(y).A(g)$ for all $y\in E_1$, $g\in G$.
Hence $F$ restricted to a fiber is the automorphism $A$ composed with a left translation.

\begin{remark} \label{rem_equivalence}
Of course, an $id_G$-map is just a principal $G$-bundle
map. In particular, an $id_G$-map that covers the identity
$id_X\colon X\rightarrow X$ is a principal $G$-bundle equivalence.
\end{remark}
\begin{remark}\label{rem_composition}
Note that the composition of an
$A$-map and a $B$-map is a $BA$-map.
\end{remark}

Now let $\pi\colon E\rightarrow X$ be a principal $G$-bundle, and
let $\{\phi_{\alpha \beta}\}$ be a cocycle of transition functions
for $E$. Note that  $\{A\circ \phi_{\alpha \beta}\}$ is also
a cocycle of transition functions. This is because
$$
A(\phi_{\alpha \beta}(x))\cdot A(\phi_{ \beta\gamma}(x))\cdot A(\phi_{\gamma\alpha}(x))=A\Big(\phi_{\alpha \beta}(x)\cdot \phi_{ \beta\gamma}(x)\cdot\phi_{\gamma\alpha}(x)\Big)=A(e)=e.
$$
Therefore the new cocycle of transition functions $\{A\circ \phi_{\alpha \beta}\}$
defines a principal $G$-bundle over $X$. We denote this bundle
by $A(E)$. Next we show that $A(E)$ is well defined.

\begin{prop} The principal $G$-bundle $A(E)$ does not
depend on the choice of the cocycle of transition functions $\{ \phi_{\alpha \beta}\}$.
\end{prop}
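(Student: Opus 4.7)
The plan is to reduce the claim to the standard fact that two cocycles of transition functions (on a common open cover) define equivalent principal $G$-bundles if and only if they differ by a coboundary, and then observe that the cocycle/coboundary relations are multiplicative identities that are preserved by any group homomorphism, in particular by $A$.

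First I would take two cocycles $\{\phi_{\alpha\beta}\}$ over an open cover $\{U_\alpha\}$ and $\{\psi_{\mu\nu}\}$ over an open cover $\{V_\mu\}$, both arising from trivializations of the same bundle $\pi\colon E\to X$. Passing to a common refinement $\{W_\gamma\}$ (obtained, e.g., from intersections $U_\alpha\cap V_\mu$) and restricting both cocycles to this refinement, I may assume that $\{\phi_{\gamma\delta}\}$ and $\{\psi_{\gamma\delta}\}$ are cocycles on the same cover $\{W_\gamma\}$. This step is routine but needs to be spelled out carefully; it uses the fact that the bundle $A(E)$ constructed from a cocycle is equivalent to the one constructed from any restriction of that cocycle to a refinement (via the obvious gluing maps from the refined trivializing charts into the original ones).

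Next, I would use the fact that, since both refined cocycles come from trivializations of the same bundle $E$, there exist continuous maps $\mu_\gamma\colon W_\gamma\to G$ such that
$$\psi_{\gamma\delta}(x) = \mu_\delta(x)^{-1}\cdot\phi_{\gamma\delta}(x)\cdot\mu_\gamma(x), \qquad x\in W_\gamma\cap W_\delta.$$
(With the sign convention of the excerpt, these $\mu_\gamma$ are exactly the left-translation factors comparing the two trivializations $\phi_\gamma$ and $\psi_\gamma$ on $W_\gamma$.) Applying the group automorphism $A$ to both sides and using that $A$ is a homomorphism, I get
$$A\bigl(\psi_{\gamma\delta}(x)\bigr) = A(\mu_\delta(x))^{-1}\cdot A(\phi_{\gamma\delta}(x))\cdot A(\mu_\gamma(x)).$$
Thus $\{A\circ\psi_{\gamma\delta}\}$ and $\{A\circ\phi_{\gamma\delta}\}$ are cohomologous cocycles on $\{W_\gamma\}$, with coboundary $\{A\circ\mu_\gamma\}$ (still continuous because $A$ is continuous). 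Therefore the two principal $G$-bundles they define are equivalent, which means the bundle $A(E)$ is independent of the choice of defining cocycle.

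The main (mild) obstacle is bookkeeping on covers: passing to a common refinement and verifying that the restricted cocycles still define the same bundle via an explicit equivalence. Once both cocycles are on the same cover, the conceptual point is trivial, because the entire definition of ``cocycle'' and of ``cohomologous cocycles'' is an equality in $G$ involving only products and inverses, and hence is automatically preserved by any continuous group homomorphism.
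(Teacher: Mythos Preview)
Your proposal is correct and follows essentially the same route as the paper's proof: reduce to a common cover via refinements, then apply $A$ to the coboundary relation $\psi_{\gamma\delta}=\mu_\delta^{-1}\phi_{\gamma\delta}\mu_\gamma$ (the paper writes this as $\phi_{\alpha\beta}r_\alpha=r_\beta\psi_{\alpha\beta}$, which is the same thing). The paper isolates the refinement step as a separate ``Special case'' before doing the coboundary argument, but the content is identical.
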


\begin{proof} Let $\{ \phi_{\alpha \beta}\}$, over
the covering $\{U_\alpha\}$, and $\{ \psi_{a b}\}$, over
the covering $\{V_a\}$, be two cocycles of transition functions, both defining
equivalent principal $G$-bundles. Denote the corresponding bundles
by $E$ and $E'$, respectively.

\medskip
\noindent {\bf Special case.} {\it The cocycle $\{ \psi_{a b}\}$
is a refinement of $\{ \phi_{\alpha \beta}\}$. That is,
the covering $\{V_a\}$ is a refinement of $\{U_\alpha\}$
(i.e every $V_a$ is contained in some $U_\alpha$), and
every $\psi_{ab}$ is the restriction of some $\phi_{\alpha \beta}$.}

\medskip
Recall that in this case the principal bundle equivalence
between $E$ and $E'$ is simply given by inclusions:
the element $(x,g)\in V_a\times G$ maps to $(x,g)\in U_\alpha\times G$, where $U_\alpha$ is a fixed (for each $a$)
element of $\{U_\alpha\}$ such that $V_a\subset U_\alpha$.

It is straightforward to verify that the same rule defines
an equivalence between $\{A\circ \phi_{\alpha \beta}\}$ and $\{A\circ \psi_{\alpha \beta}\}$. This proves the
special case.

Because of the special case we can now assume that both
cocycles $\{ \phi_{\alpha \beta}\}$, $\{ \psi_{a b}\}$
are defined over the same covering $\{U_\alpha\}$.
Then the existence of a principal bundle equivalence
between $E$ and $E'$ is equivalent to the existence a collection of functions $\{r_\alpha\}$,
$r_\alpha\colon U_\alpha\rightarrow G$ such that
\begin{equation}\phi_{\alpha\beta}(x)\cdot r_\alpha(x)=r_\beta(x)\cdot\psi_{\alpha\beta}(x)\label{eq_1}\end{equation}
 for $x\in U_{\alpha\beta}$ (see~\cite[Chapter 5, Theorem 2.7]{Hus}).
Applying $A$ to equation~(\ref{eq_1}) we obtain
$$\big(A\circ\phi_{\alpha\beta}\big)(x)\cdot\big(A\circ r_\alpha\big)(x)=\big( A\circ r_\beta\big)(x)\cdot\big(A\circ \psi_{\alpha\beta}\big)(x).$$
 Therefore, the collection $\{A\circ r_\alpha\}$ defines
a principal bundle equivalence between $\{A\circ \phi_{\alpha \beta}\}$ and $\{A\circ \psi_{\alpha \beta}\}$.
\end{proof}

\begin{prop} Let $E\rightarrow X$ be a principal $G$-bundle. Also let
$A$ and $B$ be automorphisms of $G$. Then
$$(AB)(E)=A(B(E))\,\,\,\,\,\,\, and \,\,\,\,\,\,\, id_G(E)=E.$$\end{prop}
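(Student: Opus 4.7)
The plan is to verify both identities directly at the level of cocycles of transition functions, and then invoke the previous proposition (which guarantees that $A(E)$ is independent of the chosen cocycle) to promote the cocycle-level equality to an equivalence of principal $G$-bundles.

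First I would fix a cocycle of transition functions $\{\phi_{\alpha\beta}\}$ for $E$ over some open covering $\{U_\alpha\}$ of $X$. By the definition of the construction, $B(E)$ is the principal $G$-bundle whose transition functions are $\{B\circ\phi_{\alpha\beta}\}$ over the same covering. Applying the construction again to $B(E)$ with the automorphism $A$, we obtain $A(B(E))$ as the bundle with transition functions
\[
\{A\circ(B\circ\phi_{\alpha\beta})\}=\{(A\circ B)\circ\phi_{\alpha\beta}\}=\{(AB)\circ\phi_{\alpha\beta}\}.
\]
But this last collection is precisely the cocycle defining $(AB)(E)$, so $A(B(E))$ and $(AB)(E)$ are built from exactly the same cocycle data and hence are equivalent as principal $G$-bundles.

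For the second identity, the cocycle $\{id_G\circ\phi_{\alpha\beta}\}=\{\phi_{\alpha\beta}\}$ defining $id_G(E)$ is literally the cocycle defining $E$, so $id_G(E)\cong E$.

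The only delicate point is that, a priori, $A(B(E))$ is only well-defined up to equivalence (since when we form $B(E)$ we must pick a trivializing atlas in order to apply the second round of the construction), so one should check that when we pass from $B(E)$ back to a cocycle representation to apply $A$, we can take the cocycle $\{B\circ\phi_{\alpha\beta}\}$ rather than some other cocycle for $B(E)$. This is allowed precisely because of the previous proposition, which says that the resulting bundle does not depend on the choice. I do not anticipate any genuine obstacle here — the whole argument is a bookkeeping check, and the substantive well-definedness work has already been done in the preceding proposition.
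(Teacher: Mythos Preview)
Your proposal is correct and is precisely what the paper means by ``Direct from the definition of $A(E)$'': you have simply unpacked that one-line proof by writing out the cocycles on both sides. The extra care you take in invoking the previous proposition for well-definedness is fine but not strictly necessary, since both bundles are literally built from the same cocycle $\{(AB)\circ\phi_{\alpha\beta}\}$ over the same cover.
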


\begin{proof} Direct from the definition of $A(E)$.
\end{proof}

\begin{prop} 
\label{prop_commute}
Let $E\rightarrow X$ be a principal $G$-bundle, let
$A$ an automorphism of $G$ and let $f\colon Z\rightarrow X$ be a map. Then
$$ f^*\big(A(E)\big)= A\big(f^*(E)\big)$$\end{prop}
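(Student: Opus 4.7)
The plan is to prove the identity $f^*(A(E)) = A(f^*(E))$ by computing cocycles of transition functions on both sides and observing that they coincide. Since the previous proposition guarantees that $A(E)$ is independent of the cocycle chosen to represent $E$, it suffices to exhibit one cocycle for each side and check equality.

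First I would fix a trivializing open cover $\{U_\alpha\}$ of $X$ with cocycle $\{\phi_{\alpha\beta}\}$ representing $E$. Pulling back along $f \colon Z \to X$, the bundle $f^*(E)$ is represented, over the cover $\{f^{-1}(U_\alpha)\}$ of $Z$, by the cocycle
\[
\{\phi_{\alpha\beta} \circ f \colon f^{-1}(U_{\alpha\beta}) \to G\}.
\]
Applying the $A(\cdot)$ construction, the bundle $A(f^*(E))$ is represented by the cocycle $\{A \circ (\phi_{\alpha\beta} \circ f)\}$ over the same cover of $Z$.

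On the other hand, $A(E)$ is represented by the cocycle $\{A \circ \phi_{\alpha\beta}\}$ over $\{U_\alpha\}$, and pulling back by $f$ yields the cocycle $\{(A \circ \phi_{\alpha\beta}) \circ f\}$ over $\{f^{-1}(U_\alpha)\}$ representing $f^*(A(E))$. Since composition is associative, the two cocycles $\{A \circ \phi_{\alpha\beta} \circ f\}$ obtained this way agree as maps $f^{-1}(U_{\alpha\beta}) \to G$. Therefore the two bundles are defined by the same cocycle over the same cover of $Z$ and are thus equivalent.

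There is essentially no main obstacle here: the statement is a routine naturality check, and the only subtlety is the appeal to the previous proposition to ensure that the cocycle-level equality really descends to an equivalence of bundles independent of the choices made.
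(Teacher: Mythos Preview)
Your proof is correct and follows exactly the same route as the paper's: fix a cocycle $\{\phi_{\alpha\beta}\}$ for $E$ over $\{U_\alpha\}$ and observe that $\{A\circ\phi_{\alpha\beta}\circ f\}$ over $\{f^{-1}U_\alpha\}$ represents both $f^*(A(E))$ and $A(f^*(E))$. The paper says this in one sentence; you have simply spelled out the two computations separately.
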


\begin{proof} Let $\{ \phi_{\alpha \beta}\}$ be a cocycle of transition functions
for $E$ defined  over
a covering $\{U_\alpha\}$. Then $\{ A\circ\phi_{\alpha \beta}\circ f\}$ is cocycle of transition functions
over $\{f^{-1}U_\alpha\}$ for both $f^*\big(A(E)\big)$ and  $A\big(f^*(E)\big)$.
\end{proof}

\begin{prop}\label{prop_FA}
Let $E\rightarrow X$ be a principal $G$-bundle
and let $A$ be an automorphism of $G$. Then there is an $A$-map
$F_A\colon E\rightarrow A(E)$, covering the identity $id_X\colon X\rightarrow X$.
\end{prop}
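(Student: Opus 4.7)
The plan is to construct $F_A$ explicitly using trivializing charts and then verify the three required properties: well-definedness under the gluings, covering $id_X$, and the $A$-equivariance condition $F_A(y\cdot g)=F_A(y)\cdot A(g)$.

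Concretely, pick a cocycle of transition functions $\{\phi_{\alpha\beta}\}$ over a trivializing cover $\{U_\alpha\}$ for $E$, and recall that by definition the bundle $A(E)$ is obtained by gluing the same charts $U_\alpha\times G$ via the cocycle $\{A\circ\phi_{\alpha\beta}\}$. On each chart I would set
\[
F_A\bigl|_{U_\alpha\times G}\colon U_\alpha\times G\longrightarrow U_\alpha\times G,\qquad (x,g)\longmapsto (x,A(g)),
\]
where the source is viewed as a chart of $E$ and the target as a chart of $A(E)$.

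To check that these local definitions patch together, I would use the transition rule $(\phi_\beta^{-1}\circ\phi_\alpha)(x,g)=(x,\phi_{\alpha\beta}(x)\cdot g)$ given in Section~\ref{sec_sec2}. A point of $E$ with representative $(x,g)$ in chart $\alpha$ has representative $(x,\phi_{\alpha\beta}(x)\cdot g)$ in chart $\beta$; applying $F_A$ in chart $\beta$ yields $(x,A(\phi_{\alpha\beta}(x))\cdot A(g))$. On the other hand, applying $F_A$ in chart $\alpha$ gives $(x,A(g))$, whose representative in chart $\beta$ of $A(E)$ is, by the cocycle $\{A\circ\phi_{\alpha\beta}\}$, exactly $(x,A(\phi_{\alpha\beta}(x))\cdot A(g))$. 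So both descriptions agree, and $F_A$ is a well-defined continuous (and in fact smooth, if everything is smooth) map. By construction the first coordinate is preserved, so $F_A$ covers $id_X$.

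For the $A$-map property, I would use fact~(\ref{sec_sec2}.\ref{fact21}), according to which the right $G$-action on the chart $U_\alpha\times G$ is simply $(x,g)\cdot h=(x,gh)$, and the same for $A(E)$. Then
\[
F_A\bigl((x,g)\cdot h\bigr)=F_A(x,gh)=(x,A(gh))=(x,A(g)A(h))=(x,A(g))\cdot A(h)=F_A(x,g)\cdot A(h),
\]
which is exactly the defining property of an $A$-map. I do not expect any real obstacle here: the whole content of the proposition is that the chartwise definition of $F_A$ as the automorphism $A$ on fibers is compatible with the $A$-twist that defines $A(E)$, and this compatibility is built into the very definition of $A(E)$.
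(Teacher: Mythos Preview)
Your proof is correct and follows essentially the same approach as the paper: define $F_A$ chartwise by $(x,g)\mapsto (x,A(g))$ and verify compatibility with the transition functions, which the paper phrases as commutativity of the square with $L_{\phi_{\alpha\beta}(x)}$ and $L_{A(\phi_{\alpha\beta}(x))}$. Your version is slightly more detailed in that you also spell out the $A$-equivariance and the covering of $id_X$, which the paper leaves implicit.
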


\begin{proof} Let $\{ \phi_{\alpha \beta}\}$ be a cocycle of transition functions
for $E$ over a covering $\{U_\alpha\}$. Then $\{A\circ \phi_{\alpha \beta}\}$ is a cocycle of transition functions
for $A(E)$  over $\{U_\alpha\}$. Define map $F_A$ in  charts as follows: 
$$
U_\alpha\times G\ni (x,g)\mapsto (x,A(g))\in U_\alpha\times G,
$$
where the latter copy of $U_\alpha\times G$ is a chart of
$A(E)$. The map $F_A$ is well defined because the following
diagram commutes
$$
\xymatrixcolsep{5pc}
\xymatrix{
G\ar_A[d]\ar^{L_{\phi_{\alpha \beta}(x)}}[r] & G\ar_A[d] \\
G\ar^{L_{A(\phi_{\alpha \beta}(x))}}[r] & G
}
$$
 Here $L_h$ denotes left multiplication by $h$.
\end{proof}

\begin{cor} \label{cor_FA_inverse}
Let $E\rightarrow X$ be a principal $G$-bundle
and let $A$ be an automorphism of $G$. Then there is an $A$-map
$F_{A^{-1}}\colon A^{-1}(E)\rightarrow E$, covering the identity $id_X\colon X\rightarrow X$.
\end{cor}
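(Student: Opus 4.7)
The plan is to apply Proposition \ref{prop_FA} directly, but to a cleverly chosen bundle. The key observation is that the previous proposition produces an $A$-map $E \to A(E)$ for \emph{any} principal $G$-bundle with domain $E$, so I will simply plug in the bundle $A^{-1}(E)$ in place of $E$.

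Concretely, I would proceed as follows. First, apply Proposition \ref{prop_FA} with the principal $G$-bundle $A^{-1}(E) \to X$ (which is a legitimate principal $G$-bundle by the $A(E)$ construction) and the automorphism $A$. This yields an $A$-map
$$F_A\colon A^{-1}(E) \longrightarrow A\bigl(A^{-1}(E)\bigr)$$
covering the identity $id_X\colon X\to X$. Second, invoke the composition formula $(AB)(E) = A(B(E))$ from the earlier proposition with $B = A^{-1}$ to simplify the codomain:
$$A\bigl(A^{-1}(E)\bigr) = (A\circ A^{-1})(E) = id_G(E) = E.$$
Composing the resulting identification (which is itself an $id_G$-map and therefore a principal $G$-bundle equivalence covering $id_X$, by Remark \ref{rem_equivalence}) with $F_A$ produces a map $A^{-1}(E) \to E$. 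By Remark \ref{rem_composition}, the composition of an $A$-map and an $id_G$-map is still an $A$-map, so this map is an $A$-map covering $id_X$, and we christen it $F_{A^{-1}}$.

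There is essentially no obstacle here; the proof is a one-line application of Proposition \ref{prop_FA} combined with the bookkeeping identity $A(A^{-1}(E)) = E$. If anything, the only mild subtlety is the naming convention: despite being called $F_{A^{-1}}$ (to emphasize that its domain is $A^{-1}(E)$), this map twists the $G$-action by $A$ rather than by $A^{-1}$, and this should be stated explicitly so the reader is not misled by the subscript.
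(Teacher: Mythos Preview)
Your proof is correct and matches the paper's approach exactly: the paper's one-line proof simply cites Proposition~\ref{prop_FA} and Remark~\ref{rem_composition}, and you have spelled out precisely how these combine (apply Proposition~\ref{prop_FA} to the bundle $A^{-1}(E)$ with automorphism $A$, then use $(AB)(E)=A(B(E))$ and $id_G(E)=E$ to identify the target with $E$). In fact, since the paper's earlier proposition asserts a literal equality $A(A^{-1}(E))=E$ at the cocycle level, the composition with an $id_G$-map is not even strictly needed, though invoking Remark~\ref{rem_composition} as you do is harmless.
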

\begin{proof}
 This follows from Propositions~\ref{prop_FA} and Remark~\ref{rem_composition}.
\end{proof}

Let $EG\rightarrow BG$ be the universal principal $G$-bundle
and let $A$ be an automorphism of $G$. Then $A(EG)$ is a 
principal $G$-bundle, hence (see (\ref{sec_sec2}.\ref{fact23})) there is a map
$\rho_A\colon BG\rightarrow BG$ such that 
\begin{equation}\label{eq_rho}
A(EG)\cong\rho_A^*(EG).
\end{equation}
Moreover, this map is unique up to homotopy.

\section{Principal $\T^k$-Bundles}

We now take $G=\T^k=\S^1\times\ldots\times\S^1$.  Recall that by~(\ref{sec_sec2}.\ref{fact25}) $B\T^k=(\C P^\infty)^k$. Therefore $\pi_2 B\T^k$ is canonically identified with $\Z^k$ (by identifying $i$-th generator of $\Z^k$ with the canonical generator of the second homotopy group of the $i$-th copy of $\C P^\infty$.).

Let $A\in SL (\Z,k)$. The matrix $A$ induces automorphisms $A\colon \Z^k\to \Z^k$ and $A\colon\T^k\to\T^k$ for which we use the same notation.




The next proposition is a key result and its proof occupies the rest of this section (except for the lemma at the end of this section). Recall that $\rho_A$ is characterized by equation~(\ref{eq_rho}).

\begin{prop} 
\label{prop_key}
Let $g\colon B\T^k\rightarrow B\T^k$ be a map such that
$\pi_2(g)=A\in SL(\Z,k)$. Then
$g$ is homotopic to $\rho_A$,
that is,
$$A(E\T^k)\cong g^*(E\T^k) .$$
\end{prop}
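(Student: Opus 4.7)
The plan is to exploit the fact that $B\T^k=(\C P^\infty)^k$ is an Eilenberg--MacLane space $K(\Z^k,2)$, so that homotopy classes of self-maps are classified by their induced action on $\pi_2$. Concretely,
\[
[B\T^k,B\T^k] \;\cong\; H^2(B\T^k;\Z^k) \;\cong\; \mathrm{Hom}(\Z^k,\Z^k),
\]
with the bijection sending $[g]$ to $\pi_2(g)$. Since by hypothesis $\pi_2(g)=A$, it suffices to prove that $\pi_2(\rho_A)=A$ under the canonical identification $\pi_2(B\T^k)=\Z^k$.

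To compute $\pi_2(\rho_A)$, I would combine the $A$-map $F_A\colon E\T^k\to A(E\T^k)$ of Proposition~\ref{prop_FA} with the classifying equivalence $A(E\T^k)\cong \rho_A^*(E\T^k)$ and the canonical $\T^k$-equivariant pullback map $\rho_A^*(E\T^k)\to E\T^k$ covering $\rho_A$. The resulting composition is a continuous map $H\colon E\T^k\to E\T^k$ covering $\rho_A$ whose restriction to each fiber is the automorphism $A\colon\T^k\to\T^k$ composed with a left translation. Even though $H$ is not a strict principal bundle morphism, it sends fibers to fibers, and hence induces a map of the long exact homotopy sequences of the two copies of the fibration $\T^k\to E\T^k\to B\T^k$.

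Since $E\T^k$ is contractible, the connecting homomorphism $\partial\colon\pi_2(B\T^k)\to \pi_1(\T^k)$ is an isomorphism. Because $\pi_1(\T^k)=\Z^k$ is abelian, left translations act trivially on it, so the map induced on $\pi_1$ by the fiber-restriction of $H$ is exactly $\pi_1(A)=A$ as an element of $GL(k,\Z)$. The naturality square
\[
\xymatrix{
\pi_2(B\T^k)\ar^{\partial}[r]\ar_{\pi_2(\rho_A)}[d] & \pi_1(\T^k)\ar^{A}[d] \\
\pi_2(B\T^k)\ar^{\partial}[r] & \pi_1(\T^k)
}
\]
then forces $\pi_2(\rho_A)=A$, as required. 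The main technical subtlety is verifying the fiber-restriction formula for $H$ and handling the basepoint choice carefully; once this is handled, the rest of the argument is pure naturality together with the Eilenberg--MacLane classification in the first paragraph.
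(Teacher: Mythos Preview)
Your proposal is correct and takes a genuinely different route from the paper. The paper proceeds concretely: it restricts to the 2-skeleton $Y=\bigvee_i\S^2_i$ of $B\T^k$, explicitly computes the clutching maps for both $A(E)|_{\S^2_j}$ and $(g|_Y)^*E|_{\S^2_j}$ in terms of the entries $a_{ij}$ of $A$ (this is the content of Claims~\ref{claim45} and~\ref{claim46}), checks that they agree, and then uses an $H^2$-injectivity lemma (Lemma~\ref{lemma_pullback}) to lift the equivalence from $Y$ back to all of $B\T^k$. You instead reduce the entire proposition to the single statement $\pi_2(\rho_A)=A$, and compute that by assembling an $A$-map $H\colon E\T^k\to E\T^k$ over $\rho_A$ from the pieces already available (Proposition~\ref{prop_FA}, equation~(\ref{eq_rho}), and the pullback map) and then reading off the answer from naturality of the long exact homotopy sequence; the conclusion follows at once from the Eilenberg--MacLane classification of self-maps of $K(\Z^k,2)$. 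Your route is shorter and more conceptual, bypassing the explicit clutching computation entirely; the paper's route has the virtue of making the geometric content visible---one literally sees the $j$-th column of $A$ appear as the clutching map on $\S^2_j$. Both arguments ultimately rest on the fact that $B\T^k$ is a $K(\Z^k,2)$, but the paper invokes it only at the final lifting step, whereas you invoke it at the outset. One small point worth making explicit in your write-up: the conclusion $\pi_2(\rho_A)=A$ (as opposed to merely ``conjugate to $A$ via $\partial$'') uses that the canonical identification $\pi_2(B\T^k)=\Z^k$ via the $\C P^\infty$ factors agrees with the identification via $\partial$ and $\pi_1(\T^k)=\Z^k$; this is the product of the standard fact for the Hopf bundle and is exactly the kind of basepoint/identification check you flag at the end.
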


\begin{proof}
The proof will require some lemmas and claims.

We consider $\C P^\infty=\cup_n\C P^n$
with the usual $CW$-structure, \ie one cell in each even dimension.
This structure induces a product $CW$-structure on $B\T^k=(\C P^\infty)^k$. Then the 2-skeleton of $B\T^k$ is
the wedge $\bigvee_{i=1}^k\S_i^2$ of $k$ copies of the 2-sphere
$\S^2$. Denote by $Y$ this 2-skeleton and by $E\to Y$ the restriction
of $E\T^k\to B\T^k$ to $Y$.  We first prove the proposition for the principal $\T^k$-bundle
$E\rightarrow Y$.

\begin{lemma} \label{lemma_key}
Let $g_Y\colon Y\to Y$ be a map such that
$\pi_2(g_Y)=A\in SL(\Z,k)$. Then
$$A(E)\cong g_Y^*(E).  $$
\end{lemma}

\begin{proof} Let $p$ be the wedge point of $Y$. Then we have $\S^2_i\cap\S^2_j=\{ p\}$, $i\neq j$. We identify $p$ with the south pole of
each $\S^2_i$. Denote by $D_i^+$ and $D_i^-$ the closed upper
and lower hemispheres of $\S^2_i$, respectively.

Let $E_i\to \S_i$ be the restriction of $E\to Y$ to $\S_i$, $i=1,\ldots k$.
\begin{claim}\label{claim_43}
The principal $\T^k$-bundle $E_i\to \S_i$ is obtained by identifying
$D^-_i\times \T^k$ with $D^+_i\times\T^k$ along their boundaries
using the gluing map $\omega_i\colon\S^1\rightarrow \T^k$,
$\omega_i(u)=(1,...,1,u,1,...1)$, that is, all coordinates of 
$\omega_i(u)$ are equal to $1\in\S^1$, except for the $i$-th
coordinate, which is equal to $u$.
\end{claim}

\begin{proof}
 The claim follows from
putting together the following two facts; see also~(\ref{sec_sec2}.\ref{fact25}).

\begin{enumerate}
\item The 2-skeleton of $B\S^1=\C P^\infty$
is $\C P^1=\S^2$, and the restriction of
$E\S^1=\S^\infty$ to $\S^2$ is the Hopf bundle
$\S^1\rightarrow \S^3\rightarrow\S^2$. Moreover,
$\S^3$ is obtained by identifying two copies
of $\DD^2\times\S^1$ along the boundaries using the
identity map $id_{\S^1}\colon\S^1\rightarrow \S^1$ as gluing map.

\item Let $F_1\rightarrow E_1\rightarrow X_1$
and $F_2\rightarrow E_2\rightarrow X_2$ be
two fiber bundles. Consider the inclusion  $X_1\hookrightarrow X_1\times X_2$, $x\mapsto (x,*)$, for some fixed $*\in X_2$.
Then the restriction $(E_1\times E_2)|_{X_1}$ of the product bundle $F_1\times F_2\rightarrow E_1\times E_2\rightarrow X_1\times X_2$ to $X_1\subset X_1\times X_2$ is the bundle
$F_1\times F_2\rightarrow E_1\times F_2 \rightarrow X_1$.
\end{enumerate}
\end{proof}
Write $A=(a_{ij})\in SL(\Z,k)$. Because $A=\pi_2(g_Y)$,
after performing a homotopy, we can assume
that $g_Y$ satisfies the following property.

\begin{n}\label{g_property} For each $j$ there are $k$ disjoint closed 2-disks
$D_{ij }\subset D_j^+$, $i=1,...,k$, such that
\begin{enumerate}
\item $g_Y\colon(D_{ij},\partial D_{ij})\mapsto (D^+_i,\partial D^+_i)$;
\item the degree of $g_Y\colon(D_{ij},\partial D_{ij})\rightarrow (D^+_i,\partial D^+_i)$
is $a_{ij}$.
\end{enumerate}
\end{n}

\begin{claim} \label{claim45}
The bundle $g_Y^*E|_{\S^2_j}$
is obtained by gluing
$D_j^-\times \T^k$ with $D_j^+\times\T^k$ along their boundaries
using the gluing map $f_j=\prod_{i=1}^{k} (\omega_i)^{a_{ij}}\colon\S^1\rightarrow \T^k$. That is, $f_j(u)=(u^{a_{1j}},\ldots ,u^{a_{kj}})$.
\end{claim}

\begin{proof} It follows from Claim~\ref{claim_43} and Property~\ref{g_property} that $g_Y^*E|_{\S^2_j}$
is obtained by identifying $\coprod_{i=1}^{k}  D_{ij}\times\T^k$
with $(\S^2_j-\bigcup_i\, int\, D_{ij})\times\T^k$ along their boundaries
(which is the union of $k$ copies of $\S^1\times\T^k$) via the gluing
maps $\omega_i^{a_{ij}}\colon\partial D_{ij}=\S^1\rightarrow \T^k$, $i=1,\ldots , k$.
(Here we are identifying $\partial D_{ij}$ with $\S^1$ using
the orientation on $\partial D_{ij}$ induced by $D_{ij}$.)

The claim now follows from the fact that the inclusion
$\S^1=\partial D^+_j\hookrightarrow D^+_j$ is a path 
in $D^+_j-\bigcup_i\, int\, D_{ij}$ which winds positively around each
$D_{ij}$ exactly once. 
\end{proof}

\begin{claim}\label{claim46}
The principal $\S^1$-bundle $A(E)|_{\S^2_j}\rightarrow \S^2_j$ is obtained by identifying
$D^-_j\times \T^k$ with $D^+_j\times\T^k$ along their boundaries
using the gluing map $f_j\colon\S^1\rightarrow \T^k$.\end{claim}

\begin{proof} By applying Proposition~\ref{prop_commute} to the inclusion map $\S^2_j\hookrightarrow Y$ we obtain 
$$
A(E)|_{\S^2_j}=A(E|_{\S^2_j})=A(E_j).
$$
This together
with Claim~\ref{claim_43} and the definition of $A(E_j)$ implies that
$A(E_j)$ is obtained by identifying
$D^-_j\times \T^k$ with $D^+_j\times\T^k$ along their boundaries
using the gluing map $A\circ\omega_j\colon\S^1\rightarrow \T^k$.
But 
$$
A(\omega_j(u))=A(1,...,1,u,1,...,1)=(u^{a_{1j}},...,u^{a_{kj}})=f_j(u).$$
\end{proof}

Lemma~\ref{lemma_key} now directly follows from Claims~\ref{claim45} and~\ref{claim46}.
\end{proof}

To finish the proof of Proposition~\ref{prop_key} we need the following lemma.

\begin{lemma}\label{lemma_pullback}
Let $E_1\to B\T^k$ and $E_2\to B\T^k$ be principal $\T^k$-bundles. Let $Z$ be a space and let $h\colon Z\rightarrow B\T^k$ be a map.
Assume that $h^*\colon H^2(B\T^k;\Z)\rightarrow H^2(Z;\Z)$ is injective. Then $h^*E_1\cong h^*E_2$ implies $E_1\cong E_2$.
\end{lemma}
\begin{proof}
 Recall that by (\ref{sec_sec2}.\ref{fact25}) $B\T^k=(\C P^\infty)^k$.
Hence $B\T^k$ is an Eilenberg-MacLane space of type
$(\Z^k,2)$, \ie $\pi_2B\T^k=\Z^k$ and $\pi_iB\T^k=0$, $i\neq 2$.
{ Therefore, we have that for any space $X$ the group $[X,B\T^k]$ of homotopy classes of maps from $X$ to (the Eilenberg-MacLane space) $B\T^k$ is isomorphic to $H^2(X;\Z^k)$~\cite[Theorem 4.57]{Hat}. This group splits naturally as follows
\begin{equation}\label{splitting}
H^2(X;\Z^k)\cong H^2(X;\Z)\oplus...\oplus H^2(X;\Z).
\end{equation}
Indeed, the splitting $\Z^k=\Z\oplus\Z\oplus\ldots\oplus\Z$ induces a natural splitting of the cochain complex $C^*(X;\Z^k)\cong C^*(X,\Z)\oplus\ldots\oplus C^*(X,\Z)$ and~(\ref{splitting}) follows directly from the definition of cohomology. 
}

 Let $h_i\colon B\T^k\rightarrow B\T^k$ classify
$E_i$. Then $h_i\circ h\colon Z\rightarrow B\T^k$ classifies
$h^*E_i$.
But the map $h^*\colon [B\T^k,B\T^k]\rightarrow [Z,
B\T^k]$, $f\mapsto f\circ h$ is the map
$h^*\colon H^2(B\T^k;\Z^k)\rightarrow H^2(Z;\Z^k)$. This map is injective because $h^*\colon H^2(B\T^k;\Z)\rightarrow H^2(Z;\Z)$ is injective and the splitting~(\ref{splitting}) is natural.
Therefore $h_1\circ h\simeq h_2\circ h$ implies
$h_1\simeq h_2$. 
\end{proof}

By the Cellular Approximation Theorem~\cite[Theorem 4.8]{Hat}, we can assume that $g\colon B\T^k\rightarrow
B\T^k$ is a cellular map. Hence $g$ restricts to the 2-skeleton $Y$. 

 Let $\iota\colon Y\rightarrow B\T^k$ be the inclusion map. Note that 
 $$A(E)=A(\iota^*E\T^k)\cong \iota^*A(E\T^k),
 $$
where the last equivalence is by Proposition~\ref{prop_commute}. Also note that
$$
(g|_Y)^*E=\iota^*g^*(E\T^k).
$$
 By Lemma~\ref{lemma_key}, $A(E)\cong (g|_Y)^*E$. Hence, $\iota^*A(E\T^k)\cong \iota^*g^*(E\T^k)$. Now, because $\iota^*$ is an isomorphism, Lemma~\ref{lemma_pullback} applies and we conclude that $A(E\T^k)\cong g^*(E\T^k)$. 
  This completes the proof of Proposition~\ref{prop_key} .
\end{proof}

The following is a natural question: given a homomorphism $A\colon\Z^k\rightarrow
\Z^k$, is there a map $f\colon B\T^k\rightarrow B\T^k$ such that $\pi_2(f)=A$?
It is well known that the answer to this question is affirmative.
Moreover, the map $f$ is unique up to homotopy. The next lemma
is  a bit more general, and will be needed later.

\begin{lemma} 
\label{lemma_h_exist}
Let $X$ be a simply connected space
and let $A\colon\pi_2 X\rightarrow \Z^k=\pi_2B\T^k$ be a homomorphism. Then there is
a unique up to homotopy $f\colon X\rightarrow B\T^k$ with $\pi_2(f)=A$.
\end{lemma}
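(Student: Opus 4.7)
The plan is to reduce this to the fact that $B\T^k=(\C P^\infty)^k$ is an Eilenberg--MacLane space of type $K(\Z^k,2)$. By the general classification of maps into Eilenberg--MacLane spaces (\cite[Theorem 4.57]{Hat}), there is a natural bijection
$$[X,B\T^k]\;\cong\;H^2(X;\Z^k),$$
sending a homotopy class $[f]$ to the pullback $f^*\iota$ of the canonical fundamental class $\iota\in H^2(B\T^k;\Z^k)$.

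Next I would exploit the assumption that $X$ is simply connected. The Hurewicz theorem provides $H_1(X;\Z)=0$ and an isomorphism $\pi_2(X)\to H_2(X;\Z)$. The universal coefficient theorem then yields
$$H^2(X;\Z^k)\;\cong\;\mathrm{Hom}(H_2(X;\Z),\Z^k)\;\oplus\;\mathrm{Ext}(H_1(X;\Z),\Z^k)\;=\;\mathrm{Hom}(H_2(X;\Z),\Z^k),$$
since the Ext term vanishes. Chaining these isomorphisms produces a bijection
$$[X,B\T^k]\;\cong\;\mathrm{Hom}(\pi_2(X),\pi_2(B\T^k)).$$

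The only point requiring care is verifying that this bijection really is $[f]\mapsto\pi_2(f)$, rather than some other isomorphism. I would check this by testing on $X=\S^2$: here both sides equal $\pi_2(B\T^k)=\Z^k$, and unwinding the definitions one sees that the fundamental class $\iota$ evaluated on the Hurewicz image of a generator of $\pi_2(\S^2)$ recovers precisely $\pi_2(f)$ applied to that generator. Naturality with respect to representative maps $\S^2\to X$ of arbitrary classes in $\pi_2(X)$ transports this identification to general simply connected $X$. Once the identification is in hand, existence and uniqueness up to homotopy of the desired $f$ are just surjectivity and injectivity of the bijection.

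I do not expect any serious obstacle; everything is formal once the Eilenberg--MacLane structure of $B\T^k$ is invoked. The only mildly delicate step is the last naturality check confirming that the abstract classification bijection agrees with the geometric operation $[f]\mapsto\pi_2(f)$.
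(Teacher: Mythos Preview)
Your argument is correct and complete; the only place one might ask for more detail is the final naturality check, but the sketch you give (test on $\S^2$, then transport via naturality in $X$) is the standard verification and goes through without difficulty.

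Your route differs from the paper's. The paper argues constructively: it chooses a CW structure on $X$ with no $1$-cells (possible since $X$ is simply connected, \cite[Corollary~4.16]{Hat}), builds $f$ directly on the $3$-skeleton so that $\pi_2(f)=A$ (\cite[Lemma~4.31]{Hat}), and then extends $f$ cell-by-cell to all of $X$ using that $\pi_iB\T^k=0$ for $i>2$; uniqueness is obtained the same way. You instead invoke the representability theorem $[X,K(\Z^k,2)]\cong H^2(X;\Z^k)$ as a black box and then algebraically identify the right-hand side with $\mathrm{Hom}(\pi_2X,\Z^k)$ via UCT and Hurewicz. Both approaches ultimately rest on the Eilenberg--MacLane property of $B\T^k$, but the paper's is more hands-on (one sees the map being built), while yours is cleaner and more conceptual --- and in fact aligns with how the paper itself uses \cite[Theorem~4.57]{Hat} elsewhere (cf.\ Lemma~\ref{lemma_pullback}).
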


\begin{proof}
We can { equip $X$ with a CW complex structure so that $X$ has no 1-cells~\cite[Corollary 4.16]{Hat}}.
By a simple argument we can define $f$ on the 3-skeleton
of $X$ so that $\pi_2(f)=A$ (see~\cite[Lemma 4.31]{Hat}). And since $\pi_iB\T^k=0$, $i>2$,
obstruction arguments show that $f$ can be extended
cell by cell to the whole of $X$. The proof of the uniqueness
up to homotopy is similar.
\end{proof}

\section{Principal $\T^k$-Bundles that admit $A$-Maps}

Let $E\rightarrow X$ be a principal $\T^k$-bundle and $f\colon X\rightarrow X$. Also let $A\in SL(k,\Z)$.
In this section we answer the following question:

\begin{q} \label{q51}When does there exist an $A$-map $E\rightarrow
E$ covering $f$?
$$
\xymatrixcolsep{3pc}
\xymatrix{
E\ar^{A-{\mbox{\tiny map}}}[r]\ar[d] & E\ar[d] \\
X\ar^f[r] & X
}
$$
\end{q}

Recall that by~(\ref{sec_sec2}.\ref{fact23}) every principal $\T^k$-bundle over $X$ is equivalent (as principal
bundle) to the pull-back $h^*E\T^k$ for some $h\colon X\to B\T^k$. We will use the  following notation:
$$E_h\stackrel{\textup{def}}{=}h^*E\T^k.$$ 
The next result answers Question~\ref{q51}.
It gives a relationship between $A$, $f$ and $E=E_h$
which is equivalent to the existence of an $A$-map
$E\rightarrow E $ covering $f$. The map $\rho_A$, characterized by equation~(\ref{eq_rho}), appears in
the next theorem.

\begin{theorem} 
\label{thm_Amap}
Let $A\in SL(k,\Z)$, let $X$ be a space and let $f\colon X\rightarrow X$ be a map. Also let $h\colon X\rightarrow
B\T^k$. Then
there exists an $A$-map $E_h\rightarrow
E_h$ covering $f$
$$
\xymatrixcolsep{3pc}
\xymatrix{
E_h \ar[d]\ar^{A-{\mbox{\tiny map}}}[r]& E_h \ar[d]\\
X \ar^f[r]  & X
}
$$
if and only if \,\,\,$h\circ f\simeq\rho_A\circ h$. That is,
the following diagram homotopy commutes
$$
\xymatrix{
X\ar[r]^f\ar[d]_h & X\ar[d]^h\\
B\T^k\ar[r]^{\rho_A} & B\T^k
}
$$
\end{theorem}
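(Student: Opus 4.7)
The plan is to translate the existence of an $A$-map covering $f$ into a bundle equivalence $A(E_h)\cong f^*E_h$, and then to use the classification of principal $\T^k$-bundles (fact~(\ref{sec_sec2}.\ref{fact23})) to pass back and forth between bundle equivalences and homotopies of classifying maps into $B\T^k$. The ingredients I expect to need are Proposition~\ref{prop_FA} (for the canonical $A$-map $F_A$), Proposition~\ref{prop_commute} (for the commutation of $A(\cdot)$ with pullbacks), Remark~\ref{rem_composition} (for tracking types under composition), the defining relation $A(E\T^k)\cong\rho_A^*E\T^k$, and homotopy invariance of pulled-back principal bundles.

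For the ``if'' direction, assuming $h\circ f\simeq\rho_A\circ h$, I would run the chain
$$f^*E_h=(h\circ f)^*E\T^k\cong(\rho_A\circ h)^*E\T^k=h^*\rho_A^*E\T^k\cong h^*A(E\T^k)\cong A(h^*E\T^k)=A(E_h),$$
using homotopy invariance, equation~(\ref{eq_rho}), and Proposition~\ref{prop_commute}. This yields an $id$-map $A(E_h)\to f^*E_h$ covering $id_X$, which I would compose with the canonical $id$-map $f^*E_h\to E_h$ covering $f$, and then pre-compose with the $A$-map $F_A\colon E_h\to A(E_h)$ of Proposition~\ref{prop_FA}; Remark~\ref{rem_composition} then tells me that the resulting $E_h\to E_h$ is an $A$-map over $id\circ id\circ f=f$, as required.

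For the ``only if'' direction, given an $A$-map $F\colon E_h\to E_h$ covering $f$, I would apply Proposition~\ref{prop_FA} to the bundle $A(E_h)$ with the automorphism $A^{-1}$ to obtain an $A^{-1}$-map $\Psi\colon A(E_h)\to A^{-1}(A(E_h))=E_h$ covering $id_X$. By Remark~\ref{rem_composition}, the composition $F\circ\Psi\colon A(E_h)\to E_h$ is an $(A\circ A^{-1})=id$-map covering $f$; since a principal bundle map is a fiberwise isomorphism, it exhibits a bundle equivalence $A(E_h)\cong f^*E_h$. Running the chain of equivalences above in reverse turns this into $(\rho_A\circ h)^*E\T^k\cong(h\circ f)^*E\T^k$, and the uniqueness up to homotopy of classifying maps then gives $h\circ f\simeq\rho_A\circ h$.

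The argument is essentially a bookkeeping exercise in the formalism built up in the preceding sections, so the main ``obstacle'' is not conceptual but notational: one must track carefully how the type ($A$-map, $id$-map, $A^{-1}$-map) of a fiberwise map transforms under composition via Remark~\ref{rem_composition}, and one must not confuse the auxiliary $A$-map $F_A\colon E_h\to A(E_h)$ covering $id_X$ with the $A$-map $E_h\to E_h$ covering $f$ that we are actually constructing (or starting with). No ingredient beyond those listed above appears to be required.
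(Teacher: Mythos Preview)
Your proposal is correct and follows essentially the same route as the paper's own proof: both directions hinge on the equivalence $A(E_h)\cong f^*E_h$, obtained via the chain of isomorphisms you wrote down, and the passage between an $A$-map $E_h\to E_h$ over $f$ and an $id$-map $A(E_h)\to E_h$ over $f$ is handled by pre- or post-composing with the canonical $A^{\pm 1}$-map of Proposition~\ref{prop_FA} (the paper phrases this via Corollary~\ref{cor_FA_inverse}, which is just Proposition~\ref{prop_FA} applied to $A^{-1}(E)$). Your bookkeeping of map types under Remark~\ref{rem_composition} is accurate, and no ingredient is missing.
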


\begin{proof}
First suppose that there exists an $A$-map $E_h\rightarrow E_h$
covering $f$. We have the following diagram
$$
\xymatrixcolsep{4pc}
\xymatrix{
A(E_h)\ar^{A^{-1}-{\mbox{{\tiny map}}}}[r]\ar[d] & E_h \ar[d]\ar^{A-{\mbox{\tiny map}}}[r]& E_h \ar[d]\\
X\ar^{id_X}[r] &X \ar^f[r]  & X
}
$$
where the first square comes from Corollary~\ref{cor_FA_inverse} (by taking $A^{-1}$ instead of
$A$). By composing the consecutive horizontal arrows
and using Remark~\ref{rem_equivalence} we obtain a principal $\T^k$-bundle map $A(E_h)\rightarrow
E_h$ covering $f$. Therefore 
\begin{equation}
A(E_h)\cong f^*E_h\label{eq_51}
\end{equation}
and, using Proposition~\ref{prop_commute} we obtain the following equivalences
$$(\rho_A\circ h)^*E\T^k=h^*(\rho_A^*(E\T^k))
\stackrel{(\ref{eq_rho})}{\cong}h^*A(E\T^k)\stackrel{\ref{prop_commute}}{=}A(E_h)
\stackrel{(\ref{eq_51})}{\cong} f^*(E_h)=(h\circ f)^*E\T^k$$
 and it follows that $\rho_A\circ h\simeq h\circ f$.

Conversely, suppose 
\begin{equation}
\label{eq_52}
\rho_A\circ h\simeq h\circ f
\end{equation}
Then
$$A(E_h)=A(h^*E\T^k)\stackrel{\ref{prop_commute}}{=}h^*A(E\T^k)\stackrel{(\ref{eq_rho})}{\cong}h^*(\rho_A^*(E\T^k))=(\rho_A\circ h)^*E\T^k$$ $$\stackrel{(\ref{eq_52})}{\cong} (h\circ f)^*E\T^k=f^*E_h.$$
Therefore there is a principal bundle equivalence
between $A(E_h)$ and $f^*E_h$, that is there is a $id_{\T^k}$-map
$A(E_h)\rightarrow f^*E_h$ covering $id_X$. This gives
the second square in the diagram
$$
\xymatrixcolsep{4pc}
\xymatrix{
E_h\ar[d]\ar^{A-{\mbox{{\tiny map}}}}[r] &  A(E_h)\ar[d]\ar^{{id_{\T^k}-{\mbox{{\tiny map}}}}}[r]   &  f^*(E_h)\ar[d]\ar^{{id_{\T^k}-{\mbox{{\tiny map}}}}}[r]   &  E_h\ar[d] \\
X \ar^{id_X}[r]  &   X   \ar^{id_X}[r]    &   X   \ar^f[r]      &   X  
}
$$
The first square comes from Proposition~\ref{prop_FA} and the third one
from the definition of pull-back bundle.
By composing the consecutive horizontal arrows
and using Remark~\ref{rem_composition} we obtain an
$A$-map $E_h\rightarrow E_h$ covering $f$. This completes the proof of the theorem.
\end{proof}

Our next result says that
to verify condition $\rho_A\circ h\simeq h\circ f$ in the
theorem above it is enough to verify it algebraically at
the $H^2$ level.

\begin{prop} 
\label{prop_commute_equiv}
The following are equivalent
\begin{enumerate}
\item[(1)] $\rho_A\circ h\simeq h\circ f$ 
\item[(2)] $H^2(h)\circ H^2(\rho_A)=H^2(f)\circ H^2(h)$.
\end{enumerate}
Moreover, if $X$ is simply connected and $H_2(X)$ is free
then (1) and (2) are equivalent to
\begin{enumerate}
\item[(3)] $H_2(\rho_A)\circ H_2(h)=H_2(h)\circ H_2(f)$.
\item[(4)] $\pi_2(\rho_A)\circ \pi_2(h)=\pi_2(h)\circ \pi_2(f)$.
\end{enumerate}
\end{prop}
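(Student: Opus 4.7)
The proof will be a chain of equivalences $(1)\Leftrightarrow(2)\Leftrightarrow(3)\Leftrightarrow(4)$, each using a standard tool already in play in the excerpt.

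For $(1)\Leftrightarrow(2)$, the plan is to reuse the setup from Lemma~\ref{lemma_pullback}: since $B\T^k$ is a $K(\Z^k,2)$, the bijection $[X,B\T^k]\cong H^2(X;\Z^k)$ sends a map $g$ to $g^*\iota$, where $\iota\in H^2(B\T^k;\Z^k)$ is the fundamental class. Under the natural splitting~(\ref{splitting}) this class corresponds to the $k$-tuple $(\iota_1,\dots,\iota_k)$, where each $\iota_i\in H^2(B\T^k;\Z)$ is the generator pulled back from the $i$-th factor of $(\C P^\infty)^k$, and these generate $H^2(B\T^k;\Z)$. Thus $\rho_A\circ h\simeq h\circ f$ iff $h^*\rho_A^*(\iota_i)=f^*h^*(\iota_i)$ for all $i$, and since the $\iota_i$ generate $H^2(B\T^k;\Z)$ this is exactly condition~(2).

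For $(2)\Leftrightarrow(3)$, I invoke universal coefficients. Both spaces have $H_1=0$ ($X$ is simply connected, and $B\T^k=(\C P^\infty)^k$ has trivial $H_1$), and $H_2(X)$ is free by assumption while $H_2(B\T^k)=\Z^k$ is free. Therefore $H^2(X;\Z)\cong\mathrm{Hom}(H_2(X),\Z)$ and $H^2(B\T^k;\Z)\cong\mathrm{Hom}(H_2(B\T^k),\Z)$, with no Ext contribution, and naturality of universal coefficients identifies $H^2$ of a map with the dual of its $H_2$. A linear map between free finitely generated abelian groups is determined by its dual, so the equality of compositions in (2) is equivalent to the equality of the dual compositions, which is~(3) (note that dualization reverses the order of composition, and this reversal matches the order change between (2) and (3)).

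For $(3)\Leftrightarrow(4)$, I use the Hurewicz theorem. Both $X$ and $B\T^k$ are simply connected, so the Hurewicz homomorphism $\pi_2\to H_2$ is a natural isomorphism for both. Naturality means that under these isomorphisms the induced map $H_2(f)$ corresponds to $\pi_2(f)$, and similarly for $h$ and $\rho_A$. Therefore~(3) and~(4) are the same statement.

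There is no real obstacle; the proof is a routine assembly of Eilenberg--MacLane representability, universal coefficients, and Hurewicz. The one point requiring mild care is $(1)\Leftrightarrow(2)$: it is important that we test against the \emph{generators} $\iota_i$ of $H^2(B\T^k;\Z)$ rather than only against $\iota_{\Z^k}$, which is exactly where the naturality of the splitting~(\ref{splitting}) is used (as in Lemma~\ref{lemma_pullback}).
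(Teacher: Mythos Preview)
Your proof is correct and follows essentially the same route as the paper's: the paper first isolates a lemma stating that for maps $\phi,\psi\colon X\to B\T^k$ one has $\phi\simeq\psi$ iff $H^2(\phi)=H^2(\psi)$ (and, under the extra hypotheses, iff $H_2(\phi)=H_2(\psi)$ iff $\pi_2(\phi)=\pi_2(\psi)$), and then applies it to $\phi=\rho_A\circ h$, $\psi=h\circ f$; your argument is the same Eilenberg--MacLane/UCT/Hurewicz chain, just carried out directly on the compositions. One minor remark: in your $(2)\Leftrightarrow(3)$ step you invoke ``finitely generated,'' but all you actually need is that the target $H_2(B\T^k)=\Z^k$ is free (so that dualization is injective on maps into it), which holds regardless of whether $H_2(X)$ is finitely generated.
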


This proposition follows  from the 
following lemma.

\begin{lemma}
Let $X$ be a space and let
$\phi,\, \psi:X\rightarrow B\T^k$ be maps. 
Then the following are equivalent
\begin{enumerate}
\item[(1)] $\phi\simeq \psi$,
\item[(2)] $H^2(\phi)= H^2(\psi)$.
\end{enumerate}
Moreover, if $X$ is simply connected and $H_2(X)$ is free
then (1) and (2) are equivalent to
\begin{enumerate}
\item[(3)] $H_2(\phi)= H_2(\psi)$,
\item[(4)] $\pi_2(\phi)= \pi_2(\psi)$.
\end{enumerate}
\end{lemma}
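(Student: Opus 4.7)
The plan is to exploit the fact, already used in Lemma~\ref{lemma_pullback}, that $B\T^k=(\C P^\infty)^k$ is an Eilenberg--MacLane space of type $K(\Z^k,2)$. This yields a natural bijection
$$
[X,B\T^k]\;\longleftrightarrow\; H^2(X;\Z^k)\;\cong\; H^2(X;\Z)^{\oplus k},
$$
sending a map $\phi$ to the pullback $\phi^*(\iota)$ of the fundamental class $\iota\in H^2(B\T^k;\Z^k)$. Under the splitting (\ref{splitting}), the class $\iota$ corresponds to the $k$-tuple $(x_1,\ldots,x_k)$, where each $x_i\in H^2(B\T^k;\Z)$ is the pullback of the canonical generator of $H^2(\C P^\infty;\Z)$ under the $i$-th coordinate projection. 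In particular $\{x_1,\ldots,x_k\}$ is a $\Z$-basis of $H^2(B\T^k;\Z)$.

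The implication (1)$\Rightarrow$(2), (3), (4) is a formal consequence of functoriality. For (2)$\Rightarrow$(1), I would observe that since the $x_i$ generate $H^2(B\T^k;\Z)$, the hypothesis $H^2(\phi)=H^2(\psi)$ gives $\phi^*(x_i)=\psi^*(x_i)$ for every $i$. By naturality of the splitting (\ref{splitting}) this amounts to $\phi^*(\iota)=\psi^*(\iota)$ in $H^2(X;\Z^k)$, and the classification above then forces $\phi\simeq\psi$.

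For the additional equivalences under the assumptions that $X$ is simply connected and $H_2(X)$ is free, I would run the following short chain of natural identifications. Since $H_1(X)=0=H_1(B\T^k)$ (the target is simply connected), the Universal Coefficient Theorem gives natural isomorphisms $H^2(Y;\Z)\cong\textup{Hom}(H_2(Y),\Z)$ for $Y\in\{X,B\T^k\}$, so $H^2(\phi)$ coincides with the $\Z$-linear dual of $H_2(\phi)$. As $H_2(B\T^k)=\Z^k$ is finitely generated and free, two homomorphisms $H_2(X)\to\Z^k$ with equal duals are equal; this yields (2)$\Leftrightarrow$(3). Finally, (3)$\Leftrightarrow$(4) follows from naturality of the Hurewicz isomorphism $\pi_2\cong H_2$, valid for both $X$ and $B\T^k$ by simple connectedness.

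There is no real obstacle here — the argument is an exercise in representability and naturality — but the point that requires care is the bookkeeping in step (2)$\Rightarrow$(1), where one must check that the integer-cohomology information $H^2(\phi)$ actually determines the class $\phi^*(\iota)\in H^2(X;\Z^k)$. This is precisely the naturality of (\ref{splitting}) applied to $\iota=(x_1,\ldots,x_k)$, and once it is spelled out the rest of the proof is automatic.
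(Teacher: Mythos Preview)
Your proposal is correct and follows essentially the same approach as the paper: both arguments use that $B\T^k$ is a $K(\Z^k,2)$ together with the natural splitting $H^2(X;\Z^k)\cong H^2(X;\Z)^{\oplus k}$ to prove (1)$\Leftrightarrow$(2), and then invoke the Universal Coefficient Theorem and the Hurewicz isomorphism for (3) and (4). The only cosmetic difference is that the paper phrases (2)$\Rightarrow$(1) as ``$H^2(\phi;\Z^k)$ coincides with $\phi^*\colon[B\T^k,B\T^k]\to[X,B\T^k]$, now evaluate at $id_{B\T^k}$,'' which is exactly your pullback of the fundamental class $\iota$ in different words; and for (2)$\Leftrightarrow$(3) the paper writes the terser ``$H^2(\phi)\cong H_2(\phi)^T$,'' whereas you spell out the dual-map argument explicitly.
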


\begin{proof} Clearly (1) implies (2). { Recall the splitting~(\ref{splitting}) from Lemma~\ref{lemma_pullback}.}
Assume $H^2(\phi)= H^2(\psi)$, then, by naturality of the splitting~(\ref{splitting}), the induced maps on the cohomology with $\Z^k$ coefficients also coincide. Now recall that the map
$H^2(\phi;\Z^k)\colon H^2(B\T^k;\Z^k)\rightarrow H^2(X;\Z^k)$ coincides
with the map $\phi^*\colon[B\T^k,B\T^k]\rightarrow
[X,B\T^k]$ given by $[\lambda]\mapsto [\lambda\circ \phi]$.
Similarly for $H^2(\psi;\Z^k)$. Hence
$\lambda\circ \phi\simeq\lambda\circ \psi$ for every
$\lambda$. Taking $\lambda=id_{B\T^k}$ we obtain
$\phi\simeq \psi$. This proves that~(2) implies~(1).

If $X$ is simply connected and $H_2(X)$ is free
then $H^2(X)\cong H_2(X)\cong \pi_2(X)$. { (The first isomorphism is by the Universal Coefficients Theorem~\cite[Theorem 3.2]{Hat} and the second one is by the Hurewicz Theorem~\cite[Theorem 4.37]{Hat})}
Therefore,
$H^2(\phi)\cong H_2(\phi)^T\cong \pi_2(\phi)^T$
(the superscript $^T$ denotes the transpose).
\end{proof}
To prove the proposition apply the above lemma to $\phi=\rho_A\circ h$
and $\psi= h\circ f$.

\section{Simply Connected Principal $\T^k$-Bundles}

Let $E\rightarrow X$ be a principal $\T^k$-bundle. Recall that $E\cong E_h=h^*E\T^k$, where the map
$h\colon X\rightarrow B\T^k$ is unique up to homotopy.
In this section we deal with the following question:

\begin{q} 
\label{q_simplyc}
When is the total space $E_h$ simply connected?
\end{q}

Note that the fundamental group of the total space $E_h$ surjects onto the fundamental group of $X$. Therefore $X$ has to be simply connected.
The next result answers Question~\ref{q_simplyc} when $X$ is simply connected.

\begin{prop} 
\label{prop_sc}
Let $X$ be a simply connected space, and let $h\colon X\rightarrow B\T^k$ be a map. Then 
the following are equivalent:
\begin{enumerate}
\item[(1)] the total space $E_h$ is simply connected;
\item[(2)] the homomorphism $\pi_2 (h)\colon\pi_2 X\rightarrow \pi_2B\T^k$ is onto;
\item[(3)] the homomorphism $H_2 (h)\colon H_2 X\rightarrow H_2B\T^k$ is onto.
\end{enumerate}
\end{prop}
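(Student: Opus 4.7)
\medskip

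\noindent\textbf{Proof plan.} My plan is to run the long exact sequence of homotopy groups for the fibration $\T^k\to E_h\to X$ and compare it, via naturality, with the corresponding sequence for the universal bundle $\T^k\to E\T^k\to B\T^k$. The classifying map $h$ gives a map of fibrations, so we obtain a ladder
$$
\xymatrix{
\pi_2(\T^k)\ar[r]\ar[d]^{=} & \pi_2(E_h)\ar[r]\ar[d] & \pi_2(X)\ar[r]^{\partial}\ar[d]^{\pi_2(h)} & \pi_1(\T^k)\ar[r]\ar[d]^{=} & \pi_1(E_h)\ar[r]\ar[d] & \pi_1(X)\ar[d] \\
\pi_2(\T^k)\ar[r] & \pi_2(E\T^k)\ar[r] & \pi_2(B\T^k)\ar[r]^{\partial'}_{\cong} & \pi_1(\T^k)\ar[r] & \pi_1(E\T^k)\ar[r] & \pi_1(B\T^k)
}
$$
Since $\T^k$ is a torus, $\pi_i(\T^k)=0$ for $i\ge 2$; since $E\T^k$ is contractible, the boundary $\partial'\colon\pi_2(B\T^k)\to\pi_1(\T^k)=\Z^k$ is an isomorphism. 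By hypothesis $\pi_1(X)=0$.

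\medskip

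\noindent\textbf{(1)\,$\Leftrightarrow$\,(2).} From the top row, since $\pi_2(\T^k)=0$ and $\pi_1(X)=0$, the sequence collapses to
$$
0\longrightarrow \pi_2(E_h)\longrightarrow \pi_2(X)\stackrel{\partial}{\longrightarrow}\pi_1(\T^k)\longrightarrow \pi_1(E_h)\longrightarrow 0.
$$
Hence $\pi_1(E_h)=0$ iff $\partial$ is surjective. By commutativity of the middle square in the ladder, $\partial=\partial'\circ\pi_2(h)$, and since $\partial'$ is an isomorphism, $\partial$ is surjective iff $\pi_2(h)$ is surjective. This gives (1)\,$\Leftrightarrow$\,(2).

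\medskip

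\noindent\textbf{(2)\,$\Leftrightarrow$\,(3).} Both $X$ and $B\T^k$ are simply connected, so the Hurewicz theorem gives natural isomorphisms $\pi_2(X)\cong H_2(X)$ and $\pi_2(B\T^k)\cong H_2(B\T^k)$. Naturality of Hurewicz produces a commutative square
$$
\xymatrix{
\pi_2(X)\ar[r]^{\pi_2(h)}\ar[d]_{\cong} & \pi_2(B\T^k)\ar[d]^{\cong}\\
H_2(X)\ar[r]^{H_2(h)} & H_2(B\T^k)
}
$$
so $\pi_2(h)$ is surjective iff $H_2(h)$ is surjective.

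\medskip

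\noindent\textbf{Expected obstacles.} There is essentially no obstacle beyond invoking the standard machinery; the only point requiring a moment of care is the identification of $\partial$ with $\pi_2(h)$ up to the isomorphism $\partial'$, which is a direct consequence of the functoriality of the homotopy long exact sequence applied to the bundle map $E_h\to E\T^k$ covering $h$.
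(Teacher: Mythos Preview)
Your proof is correct and follows essentially the same approach as the paper: both run the long exact sequence of the fibration $\T^k\to E_h\to X$, identify the connecting map $\partial$ with $\partial'\circ\pi_2(h)$ via naturality (the paper phrases this as a commuting triangle, you as a square in the ladder), and then invoke Hurewicz for the equivalence (2)\,$\Leftrightarrow$\,(3). Your write-up is in fact slightly more explicit about the bundle map $E_h\to E\T^k$ inducing the ladder, which is exactly the justification the paper gestures at with ``naturality of the homotopy exact sequence.''
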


\begin{proof} From the homotopy exact sequence
of the $\T^k$-bundle $\T^k\rightarrow E_h\rightarrow X$
and the fact that $\pi_1 X=0$ we obtain the exact sequence
$$\rightarrow \pi_2 X\stackrel{\partial}{\rightarrow} \pi_1 \T^k
\rightarrow \pi_1E_h\rightarrow 0$$

Therefore $\pi_1E_h=0$ if and only if $\partial$ is onto.
On the other hand from the homotopy exact sequence
of the $\T^k$-bundle $\T^k\rightarrow E\T^k\rightarrow B\T^k$
and the fact that $E\T^k$  is contractible we obtain that
$$ \pi_2 B\T^k\stackrel{\partial'}{\longrightarrow} \pi_1 \T^k
$$
 is an isomorphism. Then the equivalence (1)$\Leftrightarrow$(2)
follows from the following claim.

\begin{claim} The following diagram commutes
$$
\xymatrix{
\pi_2X \ar_{\pi_2(h)}[rd]\ar^\partial[rr] & & \pi_1\T^k \\
 & \pi_2B\T^k\ar_{\partial'}[ru] &
}
$$
\end{claim}
The claim follows from the naturality of the homotopy exact sequence
of a pair and the definition of the boundary map.

The equivalence (2)$\Leftrightarrow$(3) follows from the
naturality of the Hurewicz map and Hurewicz Theorem.
This proves the proposition. 
\end{proof}

\section{The construction}\label{sec_construction}
We specify to the case where $X$ is a simply connected 4-manifold and $f\colon X\to X$ is a diffeomorphism. We make the following collection of assumptions $(*)$.\\

\begin{enumerate}[$(*${\ref{sec_construction}.}1$)$]

\item \label{property_71}Second homotopy group $\pi_2(X)$ is a free abelian group on $m$ generators.

\item \label{property_72} The group $\pi_2(X)$ splits as a direct sum $\Z^k\oplus\Z^{m-k}$ in such a way that the first summand is $\pi_2(f)$-invariant, i.e., $\pi_2(f)|_{\Z^k}$ is an automorphism of $\Z^k\subset\pi_2(X)$.

\item \label{property_73} Let $A\in SL(k,\Z)$ be the matrix that represents $\pi_2(f)|_{\Z^k}$. Then $A$ also represents an automorphism $\R^k\to\R^k$. Assume that there exists an $A$-invariant splitting $\R^k=E_A^s\oplus E_A^c\oplus E_A^u$ and a Riemannian metric $\|\cdot\|$ on $X$ such that the numbers\\
$$
\lambda_\sigma=\min_{\substack{v\in E_A^\sigma,\\ \|v\|=1}}\|Av\|,\,\, \,\,\,
\mu_\sigma=\max_{\substack{v\in E_A^\sigma,\\ \|v\|=1}}\|Av\|, \,\,\sigma=s,c,u,
$$\\
satisfy the following inequalities
\begin{multline*}
\hfill\shoveright{\lambda_s\le\mu_s<\lambda_c\le\mu_c<\lambda_u\le \mu_u,}\hfill\\
\mu_s<m(f),\\
\hfill\lambda_u>\|Df\|,\hfill
\end{multline*}
where  $m(f)$ is minimum of the conorm $m(Df_x)$, i.e.,
$$ m(f)=\min_{\substack{v\in TX,\\ \|v\|=1}}\|Df(v)\| $$
and $\|Df\|$ is the maximum of the norm $\|Df_x\|$, i.e., 
$$\|Df\|=\max_{\substack{v\in TX,\\ \|v\|=1}}\|Df(v)\|.$$
\end{enumerate}

\begin{remark}
We allow $E_A^c$ to be trivial.
\end{remark}


\begin{theorem}\label{thm_partial_hyperbolicity}
Let $X$ be a simply connected closed 4-manifold, let  $f\colon X\to X$ be a diffeomorphism that satisfies~$(*)$ and let $\pi_h\colon E_h\to X$ be a principal $\T^k$-bundle. Assume that $E_h$ admits an $A$-map $F\colon E_h\to E_h$. Then $F\colon E_h\to E_h$ is a partially hyperbolic diffeomorphism.
\end{theorem}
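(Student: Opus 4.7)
The plan is to build the partially hyperbolic splitting of $TE_h$ from the natural vertical/horizontal decomposition of the principal $\T^k$-bundle. I would begin by choosing a principal connection on $\pi_h\colon E_h\to X$, giving a smooth splitting $TE_h=V\oplus H$ with $V=\ker D\pi_h$. Because $\T^k$ is abelian, $V$ is canonically trivialized by the Killing fields of the right $\T^k$-action, so the $A$-invariant decomposition $\R^k=E_A^s\oplus E_A^c\oplus E_A^u$ from $(*\ref{sec_construction}.\ref{property_73})$ induces a continuous vertical splitting $V=V^s\oplus V^c\oplus V^u$. Since $F$ is an $A$-map, in every local trivialization it has the form $(x,t)\mapsto(f(x),A\cdot t+\phi_\alpha(x))$; hence $DF$ preserves $V$ and, read in the canonical trivialization, acts on it as the constant linear map $A$. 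In particular each of $V^s$, $V^c$, $V^u$ is $DF$-invariant. Equipping $E_h$ with the bundle-like metric $g=\pi_h^*g_X+g_V$, where $g_V$ is the flat metric on $V$ making $V^s$, $V^c$, $V^u$ orthogonal and realizing the norm bounds of $(*\ref{sec_construction}.\ref{property_73})$, one obtains $\|DF(v)\|\in[\lambda_\sigma,\mu_\sigma]$ for unit $v\in V^\sigma$.

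I would declare $E^s:=V^s$ and $E^u:=V^u$; these are $DF$-invariant and, together with the hyperbolicity $\mu_s<1<\lambda_u$ implicit in the labels ``$s$'' and ``$u$'', yield the extremal estimates. The remaining step is to construct a $DF$-invariant center bundle $E^c$ complementary to $V^s\oplus V^u$. Any such complement is the graph of a continuous bundle map $\psi\colon V^c\oplus H\to V^s\oplus V^u$, and writing $DF$ in block form relative to the (non-$DF$-invariant) splitting $V^s\oplus V^c\oplus V^u\oplus H$ turns invariance of $\mathrm{graph}(\psi)$ into a fixed-point equation for $\psi$. The associated graph-transform operator is a contraction on continuous sections in the sup norm precisely when
\[
\mu_s<\min(\lambda_c,m(f))\qquad\text{and}\qquad \lambda_u>\max(\mu_c,\|Df\|),
\]
both of which are guaranteed by $(*\ref{sec_construction}.\ref{property_73})$. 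The Banach fixed-point theorem then yields a unique continuous $\psi$, and $E^c:=\mathrm{graph}(\psi)$ is the desired invariant center bundle.

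Finally, the partially hyperbolic inequalities reduce to rate comparisons. On $E^c$, the linearization of $DF$ is conjugate via the bundle isomorphism $E^c\cong V^c\oplus H$ to the map $(v_c,v_h)\mapsto(A_cv_c+B_cv_h,Tv_h)$ extracted from the block decomposition, whose norm is bounded below by $\min(\lambda_c,m(f))>\mu_s$ and above by $\max(\mu_c,\|Df\|)<\lambda_u$; hence $\|DF(v^s)\|<\|DF(v^c)\|<\|DF(v^u)\|$ for unit vectors in the respective bundles. The main obstacle is the construction of $E^c$: because the horizontal distribution $H$ is not $F$-invariant, the off-diagonal blocks of $DF$ mix $H$ into the vertical directions. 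Hypothesis $(*\ref{sec_construction}.\ref{property_73})$ is tailored precisely so that, despite this mixing, the extremal vertical subbundles $V^s$ and $V^u$ still dominate the remaining rates, making the graph transform converge.
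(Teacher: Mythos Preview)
Your approach is essentially the one the paper takes: identify the stable and unstable bundles with the vertical sub-bundles $V^s$ and $V^u$ coming from the $A$-invariant splitting of $\R^k$, equip $E_h$ with a bundle-like metric, and then produce an invariant center complement by an invariant-section argument. The paper packages the latter as \cite[Lemma~2.18]{HPS} rather than running the Banach fixed-point argument by hand, but the content is the same.

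There is, however, one genuine slip. Writing the center as $\mathrm{graph}(\psi)$ with $\psi\colon V^c\oplus H\to V^s\oplus V^u$, the single graph-transform operator you describe is \emph{not} a contraction: the component of $\psi$ landing in $V^u$ gets hit by $A|_{E_A^u}$, which expands. One must treat the $V^s$-valued and $V^u$-valued components separately --- the $\psi^s$ part contracts under the forward graph transform (this uses $\mu_s<\min(\lambda_c,m(f))$), while the $\psi^u$ part contracts under the backward one (this uses $\lambda_u>\max(\mu_c,\|Df\|)$). The paper does exactly this: it applies the HPS lemma once to obtain an invariant complement $\widehat{E^s}$ to $V^s$, once more with the roles of $s$ and $u$ exchanged to obtain an invariant complement $\widehat{E^u}$ to $V^u$, and then sets the center equal to $\widehat{E^s}\cap\widehat{E^u}$. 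Your argument is easily repaired along these lines.

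A smaller point: the claimed pointwise bounds $\min(\lambda_c,m(f))\le\|DF|_{E^c}\|\le\max(\mu_c,\|Df\|)$ do not hold in the given metric because of the off-diagonal block $B_c$. The paper addresses this by choosing a horizontal $H'$ \emph{inside} the invariant center $\widehat{E^c}$ and redefining the metric accordingly; even then one is implicitly invoking the standard adapted-metric argument for block-triangular maps with dominated diagonal blocks. Your conclusion is correct, but the stated inequality needs this extra step.
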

 Clearly the splitting of $(*$\ref{sec_construction}.\ref{property_73}$)$ descends to a $\T^k$-invariant splitting of the tangent bundle $T\T^k=E^s_A\oplus E^c_A\oplus E^u_A$. Then the action of $\T^k$ on $E_h$ induces a $\T^k$ invariant splitting of $T\T^k=E^s\oplus E^c\oplus E^u$; here, abusing notation, $T\T^k$ is the subbundle of $TE_h$ that consists of vectors tangent to the torus fibers. Because $F\colon E_h\to E_h$ is an $A$-map, this splitting is $DF$-invariant.
\begin{add}[to Theorem~\ref{thm_partial_hyperbolicity}] The subbundles $E^s$ and $E^u$ defined above are the stable and the unstable subbundles for $F$, respectively. The center subbundle for $F$ has the form $E^c\oplus H'$, where $H'$ is a certain subbundle complementary to $T\T^k$.
\end{add}
\begin{proof}
We equip $TE_h$ with a Riemannian metric in the following way. The flat metric on the torus induces a metric on $T\T^k$. Also recall that by $(*$\ref{sec_construction}.\ref{property_73}$)$ we have equipped $X$ with a Riemannian metric $\|\cdot\|$. Choose a continuous horizontal subbundle $H\subset TE_h$ such that $TE_h=T\T^k\oplus H$. Then
$$
(D\pi_h)_x\colon H(x)\to T_{\pi_h(x)}X
$$
is an isomorphism for every $x\in E_h$. Set
$$
\|v\|=\|D\pi_h(v)\|
$$
for $v\in H$. Then extend the Riemannian metric $\|\cdot\|$ to the rest of $TE_h$ by declaring $T\T^k$ and $H$ perpendicular.

Consider the following commutative diagram
\begin{equation}\label{eq_exact_seq}
\xymatrix{
0 \ar[r]      & E^s \ar[r]\ar[d]_{DF|_{E^s}}    & TE_h  \ar[r] \ar[d]_{DF}    & E^c\oplus E^u\oplus H \ar[r]    \ar[d]_{DF\circ p} & 0  \\
0 \ar[r]      & E^s \ar[r]    & TE_h  \ar[r]     & E^c\oplus E^u\oplus H \ar[r]     & 0
}
\end{equation}
The horizontal rows are short exact sequences of Riemannian vector bundles and all vertical automorphisms fiber over $f\colon X\to X$. The last vertical arrow is defined as the composition of $DF$ and the orthogonal projection $p$ on $E^c\oplus E^u\oplus H$. Note that the diagram 
\medskip
$$
\xymatrix{
E^c\oplus E^u\oplus H \ar[d]_{D\pi_h} \ar[r]^{Df\circ p} & E^c\oplus E^u\oplus H \ar[d]_{D\pi_h}\\
TX\ar[r]^{Df}\ar[r]^{Df} & TX
}
$$
\medskip
commutes and, hence, by our choice of the Riemannian metric
$$
\|DF(p(v))\|\ge\min(\lambda_c, m(f))\|v\|.
$$
\medskip
Combining with $(*$\ref{sec_construction}.\ref{property_73}$)$ we obtain we following bound on the minimum of the conorm 
\begin{equation}
\label{eq_inequality}
m(Df\circ p)>\mu_s
\end{equation}

\begin{lemma}[\cite{HPS}, Lemma 2.18]
\label{lemma_HPS}
Let
$$
\xymatrix{
0\ar[r] & E_1 \ar[r]^i \ar[d]^{T_1} & E_2 \ar[r]^j \ar[d]^{T_2} & E_3 \ar[r] \ar[d]^{T_3} & 0 \\
0\ar[r] & E_1 \ar[r]^i & E_2 \ar[r]^j & E_3 \ar[r]  & 0
}
$$
be a commutative diagram of short exact sequences of Riemannian vector bundles, all over a compact metric space $X$, where $T_i\colon E_i\to E_i$ are bundle automorphisms over the base homeomorphism $f\colon X\to X$, $i=1,2,3$. If
$$
m(T_3|_{E_3(x)})> \|T_1|_{E_1(x)}\|
$$
for all $x\in X$, then $i(E_1)$ has a unique $T_2$-invariant complement in $E_2$.
\end{lemma}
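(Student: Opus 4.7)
The plan is to represent any subbundle of $E_2$ complementary to $i(E_1)$ as the graph of a bundle homomorphism over a fixed reference complement, translate $T_2$-invariance into a twisted fixed-point equation for that homomorphism, and apply the Banach contraction mapping theorem, using the hypothesis $m(T_3|_{E_3(x)})>\|T_1|_{E_1(x)}\|$ to obtain the contraction rate.

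First I would use the Riemannian metric on $E_2$ to form the orthogonal complement $E_3'$ of $i(E_1)$, so that $E_2=i(E_1)\oplus E_3'$, and identify $E_3'$ with $E_3$ via the bundle isomorphism $j|_{E_3'}$. In this splitting $T_2$ becomes
$$
T_2 \;=\; \begin{pmatrix} T_1 & B\\ 0 & T_3\end{pmatrix},
$$
where $B\colon E_3\to i(E_1)$ is the ``off-diagonal'' bundle map induced by $T_2$ and covers $f$. The lower-left entry vanishes because $T_2\circ i=i\circ T_1$, and the diagonal entries are forced by the commutation relations $T_2\circ i=i\circ T_1$ and $j\circ T_2=T_3\circ j$.

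Next, every continuous subbundle $C\subset E_2$ complementary to $i(E_1)$ and of fiber rank equal to $\mathrm{rank}(E_3)$ is the graph
$$
C_\sigma \;=\; \{\,\sigma(v)+v : v\in E_3\,\}
$$
of a unique continuous bundle homomorphism $\sigma\colon E_3\to i(E_1)$ covering $id_X$. A direct computation with the matrix form above shows that $T_2(C_\sigma|_x)\subset C_\sigma|_{f(x)}$ for every $x\in X$ if and only if the twisted equation
$$
\sigma_{f(x)}\circ T_3|_{E_3(x)} \;=\; T_1|_{E_1(x)}\circ \sigma_x + B_x
$$
holds at every $x$. Equivalently, $\sigma$ is a fixed point of the affine operator $\Phi$ on the Banach space $\mathcal B$ of continuous bundle maps $E_3\to i(E_1)$ (with the sup of fiberwise operator norms) defined by
$$
(\Phi\sigma)_y \;=\; T_1\circ \sigma_{f^{-1}(y)}\circ \bigl(T_3|_{E_3(f^{-1}(y))}\bigr)^{-1} + B_{f^{-1}(y)}\circ \bigl(T_3|_{E_3(f^{-1}(y))}\bigr)^{-1}.
$$

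Finally, a direct fiberwise estimate gives
$$
\|\Phi\sigma-\Phi\sigma'\| \;\le\; \sup_{x\in X}\frac{\|T_1|_{E_1(x)}\|}{m(T_3|_{E_3(x)})}\cdot\|\sigma-\sigma'\|.
$$
By hypothesis the ratio inside the supremum is strictly less than $1$ pointwise, and since $X$ is compact and $T_1,T_3$ are continuous, this supremum is itself strictly less than $1$. Thus $\Phi$ is a contraction; the Banach contraction principle produces a unique fixed point $\sigma\in\mathcal B$, and $C_\sigma$ is the unique $T_2$-invariant complement of $i(E_1)$ in $E_2$. The main obstacle I anticipate is just bookkeeping: because $T_2$ covers $f$ rather than $id_X$, the invariance equation mixes fibers over $x$ and $f(x)$, so one must track the base action carefully when setting up $\Phi$ and checking that it preserves continuity. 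The contraction estimate itself then falls out immediately from the hypothesis.
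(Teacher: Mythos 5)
Your proposal is correct. The paper does not supply its own proof here---it cites \cite{HPS}, Lemma~2.18 directly---and the graph-transform/contraction argument you give (splitting $E_2=i(E_1)\oplus E_3'$ via the metric, writing $T_2$ in upper-triangular block form, translating invariance into the twisted affine fixed-point equation for $\sigma$, and using compactness of $X$ together with $m(T_3)>\|T_1\|$ to get a uniform contraction constant $<1$) is exactly the standard Hirsch--Pugh--Shub proof of that lemma.
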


Because we have~(\ref{eq_inequality}), we can apply Lemma~\ref{lemma_HPS} to~(\ref{eq_exact_seq}) and obtain a $DF$-invariant splitting $TE_h=E^s\oplus \widehat{E^s}$. Exchange the roles of $E^s$ and $E^u$ and apply the same argument to obtain a $DF$-invariant splitting $TE_h= \widehat{E^u}\oplus E^u$. It is easy to see that $E^c\oplus E^u\subset \widehat{E^s}$ and $E^s\oplus E^c\subset\widehat{E^u}$. Let
$$
\widehat{E^c}=\widehat{E^s}\cap\widehat{E^u}.
$$
Then, clearly, we have a $DF$-invariant splitting $TE_h=E^s\oplus \widehat{E^c}\oplus E^u$. 

To see that $F$ is partially hyperbolic with respect to this splitting pick a continuous decomposition $\widehat{E^c}=E^c\oplus H'$, and define a new Riemannian metric $\|\cdot\|'$ on $TE_h$ in the same way $\|\cdot\|$ was defined, but using $H'$ instead of $H$; i.e., we declare
\begin{enumerate}
\item $\|v\|'=\|v\|$ if $v\in T\T^k$,
\item $\|v\|'=\|D\pi_h(v)\|$ if $v\in H'$,
\item $H'$ is orthogonal to $T\T^k$.
\end{enumerate}
Now partial hyperbolicity (with respect to $\|\cdot\|'$) is immediate from the inequalities of $(*$\ref{sec_construction}.\ref{property_73}$)$.
\end{proof}

\section{The base space --- the Kummer surface}\label{sec_kummer_surface}
A K3 surface is a simply connected complex surface whose canonical bundle is trivial. All K3 surfaces are pairwise diffeomorphic and have the same intersection form $2(-E_8)\oplus
3\left(
\begin{smallmatrix}
 0 & 1\\
 1 & 0
\end{smallmatrix}\right)
$.
In this section we recall Kummer's construction of the K3 surface and describe a holomorphic atlas on it.

Consider the complex torus
$$
\T^2_\C=\C^2/(\Z\oplus i\Z)^2.
$$
Also consider the involution $\iota\colon \T^2_\C\to \T^2_\C$ given by $\iota(z_1,z_2)=(-z_1,-z_2)$. It has 16 fixed points which we  call the {\it exceptional set} and which we denote by $\E(\T^2_\C)$. Note that $\T^2_\C/\iota$ is not a topological manifold because the neighborhoods of the points in the exceptional set are cones over $\R P^3$-s. Replace the neighborhoods of the points from the exceptional set with copies of $\overline{\C P}^2$ to obtain the blown up torus $\T^2_\C\#16\overline{\C P}^2$ ({ here $\overline{\C P}^2$ stands for the two (complex) dimensional complex projective with reversed orientation;} see \eg~\cite[p.~286]{Sc} for details on complex blow up). The involution $\iota$ naturally induces a holomorphic involution $\iota'$ of $\T^2_\C\#16\overline{\C P}^2$. The involution $\iota'$ fixes 16 copies of $\C P^1$. One can check that the quotient
$$
X\stackrel{\textup{def}}{=}\T^2_\C\#16\overline{\C P}^2/\iota'
$$
is a 4-dimensional manifold. This manifold is called the {\it Kummer surface}. Note that it comes with a map
\begin{equation}\label{eq_sigma}
\sigma\colon\T^2_\C\backslash\E(\T^2_\C)\to X,
\end{equation}
which is a double cover of its image $X\backslash\E(X)$, where $\E(X)$ is the {\it exceptional set} in $X$, \ie  the union of 16 copies of $\C P^1$.
One can also check that $X$ is simply connected. (See~\cite[Chapter 3.3]{Sc} for more details.) 

In fact, $X$ is a complex surface and we proceed to describe the complex structure on $X$. For any connected open set $\mathcal V$ which is disjoint from the exceptional set $\E(X)$ and whose preimage under $\sigma$ has 2 connected components, a holomorphic chart on $\T^2_\C$ for one of the connected components of $\sigma^{-1}(\mathcal V)$ induces a chart on $\mathcal V$ by composing with $\sigma$. Hence we are left to describe the charts on a neighborhood of $\E(X)$. 

Let $p\in\E(\T^2_\C)$. We identify a neighborhood of $p$ in $\T^2_\C$ with a neighborhood $\U$ of $(0,0)$ in $\C^2$. Then we blow up $p$, which amounts to replacing $\U$ with
$$
\U'=\{(z_1, z_2, \ell(z_1, z_2)): (z_1, z_2)\in \U, (z_1, z_2)\in \ell(z_1, z_2)\}.
$$
Here $\ell(z_1, z_2)$ is a complex line through $(0,0)$ and $(z_1, z_2)$. Hence, if $(z_1, z_2)\neq (0,0)$ then $\ell(z_1, z_2)=[z_1 : z_2]$ in homogeneous coordinates. Finally, note that 
\begin{equation}\label{eq_u_2primes}
\U''=\{(z_1, z_2, \ell(z_1, z_2))\in\U'\}/(z_1, z_2, \ell(z_1, z_2))\sim(-z_1, -z_2, \ell(z_1, z_2))
\end{equation}
is identified with a neighborhood of $\C P^1\subset\E(X)$ in $X$. We will cover $\U''$ by two charts.

Note that the inclusion $\U\hookrightarrow\C^2$ induces the inclusion $\U'\hookrightarrow\C^2\#\overline{\C P}^2$ and then the inclusion $\U''\hookrightarrow \C^2\#\overline{\C P}^2/\iota''$, where $\iota''$ is induced by $(z_1,z_2)\mapsto (-z_1, -z_2)$. We will define charts for $\C^2\#\overline{\C P}^2/\iota''$. Then to obtain charts for $\U''$ one just needs to take the restrictions of the charts for $\C^2\#\overline{\C P}^2/\iota''$.

First note that
$$
\C^2\#\overline{\C P}^2=\{(z_1, z_2, \ell(z_1, z_2)): (z_1, z_2)\in \C^2, (z_1, z_2)\in \ell(z_1, z_2)\}\subset \C^2\times\C P^1.
$$
The projective line $\C P^1$ can be covered by two charts $u\mapsto [u : 1]$ and $u'\mapsto [1 : u']$. These charts extend to charts for $\C^2\#\overline{\C P}^2$ as follows
$$
\varphi_1\colon (u_1, u_2)\mapsto (u_1u_2, u_2, [u_1 : 1])
$$
and
$$
\varphi_2\colon (u_1', u_2')\mapsto (u_2', u_1'u_2', [1 : u_1']).
$$
Define $\xi\colon\C^2\to\C^2$ by $\xi(u_1, u_2)=(u_1, u_2^2)$. By a direct check, we see that the following composition
$$
\xymatrix{
\C^2\ar^{\xi^{-1}}[r] & \C^2 \ar^{\varphi_i\;\;\;\;\;\;\;}[r] &\C^2\#\overline{\C P}^2 \ar[r] & \C^2\#\overline{\C P}^2/\iota''
}
$$
is independent of the branch of $\xi$ and gives a well defined chart $\psi_i$ (homeomorphism on the image), $i=1,2$. It is also easy to see that the images of $\psi_1$ and $\psi_2$ cover $\C^2\#\overline{\C P}^2$. Calculating
\begin{equation}
\label{eq_transition}
\psi_2^{-1}\circ\psi_1(v,w)=(1/v, v^2w)
\end{equation}
confirms that the atlas is holomorphic.
\begin{remark}\label{rem_atlas}
Formulas
$$
\psi_1(v,w)=(v\sqrt{w},\sqrt{w}, [v : 1]);\;\;\;\;\;\;\;\; \psi_2(v,w)=(\sqrt{w}, v\sqrt{w}, [1 : v])
$$
also show that charts $\psi_1$ and $\psi_2$ are compatible with the charts induced from $\C^2\backslash\{(0,0)\}$ by the double cover $\C^2\backslash\{(0,0)\}\to \left(\C^2\#\overline{\C P}^2/\iota\right)\backslash\C P^1$ of the complement of the exceptional set.
\end{remark}
\begin{remark}
\label{rem_form} Consider the  2-form $dz_1\wedge dz_2$ on $\T^2_\C$ and its pushforward $\eta=\sigma_*(dz_1\wedge dz_2)$ to $X\backslash\E(X)$ (it is well defined because $dz_1\wedge dz_2=(-dz_1)\wedge (-dz_2)$). Calculating the latter in the chart $\psi_1$ yields
$$
d(v\sqrt w)\wedge d\sqrt w=\frac12dv\wedge dw.
$$
Together with an analogous calculation in the chart $\psi_2$ this implies that $\eta$ extends to a non-vanishing 2-form on $X$.
\end{remark}
Remark~\ref{rem_atlas} shows that the charts defined above for $\E(X)$ are compatible with the charts induced by $\sigma$ from the charts for $\T^2_\C$. Hence we have equipped $X$ with a holomorphic atlas.

\section{The base dynamics --- automorphisms of Kummer surfaces}

Let $B\in SL(2,\Z)$ be a hyperbolic matrix. Then $B$ induces an automorphism $B_\C\colon\T^2_\C\to \T^2_\C$. Note that after appropriately identifying $\T^2_\C$ with the real torus $\T^4$ the matrix that represents $B_\C$ is 
$
B\oplus B=
\left(
\begin{smallmatrix}
 B & 0\\
 0 & B
\end{smallmatrix}\right)
$.
We use this identification $\T^2_\C\cong\T^4$ repeatedly in what follows. The automorphism $B_\C$ naturally induces an automorphism of $\T^2_\C\#16\overline{\C P}^2$ and, hence, because the latter commutes with $\iota'$, descends to a homeomorphism $f_B\colon X\to X$. It is easy to verify that $f_B$ is, in fact, a complex automorphism of $X$.
The second integral cohomology group of $X$ is $\Z^{22}$ and the second rational cohomology group admits a splitting
\begin{equation}
\label{eq_splitting_cohomology}
H^2(X;\Q)\cong \Q^6\oplus\Q^{16},
\end{equation}
where $\Q^6$ is inherited from $H^2(\T^2_\C;\Q)$ and the remaining 16 copies of $\Q$ come from the 16 copies of ${\C P}^1$ in $\E(X)$. See~\cite[Chapter VIII]{BHPV} for a proof of these facts.
\begin{prop}\label{prop_induced_cohomology}
 The induced automorphism $f_B^*\colon H^2(X;\Z)\to H^2(X;\Z)$ is represented by the matrix $\textup{diag}(B^2,id_{\Z^4}, S_{16})$, where $S_{16}$ is a permutation matrix given by the restriction of $B_\C$ to $\E(\T^2_\C)$.
\end{prop}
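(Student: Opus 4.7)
The plan is to decompose $H^2(X;\Q)$ into $f_B^*$-invariant pieces using the splitting $H^2(X;\Q)\cong \Q^6\oplus \Q^{16}$ from~(\ref{eq_splitting_cohomology}) and compute the action on each summand. That splitting is $f_B^*$-invariant because $B_\C$, being a group automorphism of $\T^2_\C$, preserves the $2$-torsion subgroup $\tfrac{1}{2}(\Z[i])^2/(\Z[i])^2=\E(\T^2_\C)$; hence $f_B$ preserves the exceptional set $\E(X)$ setwise.

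First I would handle the $\Q^{16}$ summand, which is spanned by the Poincar\'e duals of the 16 exceptional $\C P^1$'s. The diffeomorphism $f_B$ permutes these curves exactly as $B_\C$ permutes the 16 fixed points of $\iota$ --- equivalently, via the action of $B\oplus B\bmod 2$ on $(\Z/2\Z)^4$. Hence $f_B^*$ acts on $\Q^{16}$ as the corresponding permutation matrix $S_{16}$.

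For the $\Q^6$ summand I would use the branched double cover $\pi\colon \T^2_\C\#16\overline{\C P}^2\to X$ to identify $\Q^6$ with $H^2(\T^2_\C;\Q)=\Lambda^2 H^1(\T^4;\Q)$ as $f_B^*$-modules, with $f_B^*$ corresponding to $B_\C^*$. The map $\pi^*$ is a $\Q$-isomorphism on $H^2$ because $\iota'^*$ is trivial there: it acts as $\Lambda^2(-\Id)=\Id$ on the $H^2(\T^2_\C)$ factor, and it fixes each exceptional $\C P^1$ pointwise (its differential at each blow-up center is $-\Id$, which acts trivially on the projectivization $\C P^1$). Once we arrange $\T^2_\C\cong\T^4$ so that $B_\C=B\oplus B$ on $H_1$, the decomposition
\[
\Lambda^2(V_1\oplus V_2)=\Lambda^2 V_1\oplus (V_1\otimes V_2)\oplus \Lambda^2 V_2
\]
is $B_\C^*$-invariant, with trivial action on the two outer $1$-dimensional summands (since $\det B=1$) and action $B\otimes B$ on the middle $4$-dimensional summand.

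To finish, I would decompose $B\otimes B$: an isomorphism $V_1\cong V_2$ of $B$-representations yields $V_1\otimes V_2\cong V\otimes V=\textup{Sym}^2 V\oplus \Lambda^2 V$. On the $1$-dimensional $\Lambda^2 V$ the action is again trivial, and on the $3$-dimensional $\textup{Sym}^2 V$ the operator $\textup{Sym}^2 B$ has eigenvalues $\lambda^2, 1, \lambda^{-2}$ (where $\lambda, \lambda^{-1}$ are the eigenvalues of $B$). Hyperbolicity of $B$ forces $\lambda^{\pm 2}\notin\Q$, so the rational primary decomposition gives $\textup{Sym}^2 B\sim B^2\oplus id_1$ over $\Q$. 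Assembling, $f_B^*|_{\Q^6}\sim B^2\oplus id_4$, which combined with the $\Q^{16}$ computation yields the claimed block form $\textup{diag}(B^2, id_{\Z^4}, S_{16})$. The main subtle point is the passage from $\Q$- to $\Z$-coefficients: the rational splitting $\Q^6\oplus\Q^{16}$ need not lift to a direct-sum decomposition of $H^2(X;\Z)$ into $f_B^*$-invariant sublattices, so the cleanest reading of the proposition is that it records the $\Q$-similarity class of $f_B^*$, which the above calculation determines.
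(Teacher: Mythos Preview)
Your approach is the paper's: pass to $\Q$-coefficients (using that $H^2(X;\Z)$ is free), invoke the naturality of the splitting $\Q^6\oplus\Q^{16}$, identify the action on $\Q^{16}$ with the permutation $S_{16}$ and on $\Q^6$ with $\Lambda^2(B\oplus B)$, then rewrite the latter as $B^2\oplus id_4$. You supply more detail than the paper on this last step (via $\Lambda^2(V_1\oplus V_2)$ and the $\textup{Sym}^2\oplus\Lambda^2$ decomposition of $V\otimes V$), where the paper simply asserts that ``after an (integral) change of basis'' one obtains $\textup{diag}(B^2, id_{\Z^4})$.

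Your caution about the $\Q$-to-$\Z$ passage is warranted and in fact applies equally to the paper's own argument: the geometric splitting $\Q^6\oplus\Q^{16}$ does \emph{not} descend to a direct-sum decomposition of the K3 lattice $H^2(X;\Z)$ (the sublattice spanned by the sixteen $(-2)$-curves is not primitive---it has index $2^5$ in its saturation), so neither proof as written establishes the literal integral conjugacy without further work. For the applications downstream (verifying the conditions $(*)$ and constructing the $A$-map) only the rational conjugacy class and the existence of suitable invariant primitive sublattices are needed, and these do follow from what you prove.
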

\begin{proof}
Note that, by the universal coefficients theorem, it suffices to show that the induced automorphism of the rational cohomology $f^*_B\colon H^2(X;\Q)\to H^2(X;\Q)$ has the posited form. Then we can use naturality of the isomorphism~(\ref{eq_splitting_cohomology}). Under this isomorphism 
 the restriction $f^*_B|_{H^2(\T^2_\C;\Q)}$ corresponds to $B_\C^*\colon H^2(\T^2_\C;\Q)\to H^2(\T^2_\C;\Q)$ given by $(B\oplus B)\wedge(B\oplus B)$. And the restriction $f^*_B|_{\Q^{16}}$ permutes the coordinates according to the permutation $S_{16}$ given by the restriction of $B_\C$ to $\E(\T^2_\C)$. After an (integral) change of basis we obtain that $f_B^*$ is given by $\textup{diag}(B^2,id_{\Z^4}, S_{16})$.
 \end{proof}
\begin{remark}
Note that the basis in which the automorphism has the above diagonal form is not completely canonical because we use the eigenvectors that correspond to unit eigenvalues to write $(B\oplus B)\wedge(B\oplus B)$ as $\textup{diag}(B^2, id_{\Z^4})$.
\end{remark}

The goal now is to perturb $f_B$ so that the perturbation satisfies the collection of assumptions $(*)$ from Section~\ref{sec_construction}.

Set 
$
B=
\left(
\begin{smallmatrix}
 13 & 8\\
 8 & 5
\end{smallmatrix}\right)
$. Note that because of this choice of $B$ the automorphism $B_\C$ fixes points in $\E(\T^2_\C)$.

Embed the automorphism $B\colon\T^2\to\T^2$ into a 2-parameter family of diffeomorphisms of $\T^2$
$$
B_{\e,d}(x,y) =(13x-h_{\e,d}(x)+8y, 8x-h_{\e,d}(x)+5y), \;\;\;\e\ge 0, \;\;d\in\Z_+.
$$
Here $h_{\e,1}\colon S^1\to S^1$ is a $C^\infty$ smooth function that has the following properties:
\begin{enumerate}
 \item $h_{\e,1}(-x)=-h_{\e,1}(x)$;
 \item $\forall x\in S^1\;\;|h_{\e,1}'(x)|\le \e$;
 \item $h_{\e,1}(x)=h_{\e,1}(x+\frac12)=\e x$ for $x\in U$, where $U$ is a small symmetric neighborhood of $0\in S^1$;
\end{enumerate}
The existence of such function for sufficiently small $U$ can be seen by standard $C^\infty$-gluing techniques. 
 To define $h_{\e,d}\colon S^1\to S^1$ consider the $d$ sheeted self cover $S^1\to S^1$ given by $x\mapsto dx$ and let $h_{\e,d}$ be the lifting of $h_{\e,1}$ that fixes $0$. It is clear that $h_{\e,d}$ also satisfies properties~1 and~2 and the following variant of 3:
 \begin{enumerate}
 \item[$3'$.] $h_{\e,d}(x)=h_{\e,d}(x+\frac12)=\e x$ for $x\in U_d$, where $U_d$ is the connected component of $0\in S^1$ of the set $\{x: dx\in U\}$;
\end{enumerate}
 
 Note that $B_\C\colon \T^2_\C\to\T^2_\C$ embeds into the  2-parameter family $B_{\e, d}\oplus B_{\e, d}\colon\T^2_\C\to\T^2_\C$. (Recall that we have an identification $\T^4\cong\T^2_\C$.)
\begin{prop}
 \label{prop_does_induce}
The diffeomorphisms  $B_{\e,d}\oplus B_{\e,d}\colon\T^2_\C\to\T^2_\C$ induce volume preserving, Bernoulli, diffeomorphisms $f_{\e,d}\colon X\to X$ for sufficiently small $\e\ge 0$ and all $d\ge 1$.
\end{prop}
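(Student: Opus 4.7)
The plan is to verify, in order: (i) that $B_{\e,d}\oplus B_{\e,d}$ descends to a smooth diffeomorphism $f_{\e,d}\colon X\to X$; (ii) that $f_{\e,d}$ preserves a smooth measure; (iii) that $f_{\e,d}$ is Bernoulli with respect to this measure.

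For (i), three observations combine. The oddness of $h_{\e,d}$ (property 1) gives $B_{\e,d}(-v)=-B_{\e,d}(v)$ on $\T^2$, so $B_{\e,d}\oplus B_{\e,d}$ commutes with $\iota$ on $\T^2_\C$. A direct computation shows that $B$ fixes each of the four half-lattice points of $\T^2$; since property 3' implies $h_{\e,d}(0)=h_{\e,d}(1/2)=0$, $B_{\e,d}$ also fixes them, and hence $B_{\e,d}\oplus B_{\e,d}$ fixes all 16 points of $\E(\T^2_\C)$. By property 3' again, $B_{\e,d}$ is affine in a neighborhood of each such fixed point, with invertible linear part $\bigl(\begin{smallmatrix}13-\e & 8\\ 8-\e & 5\end{smallmatrix}\bigr)$ of determinant $1+3\e$. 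The key local point is that any invertible affine map of $\C^2$ commuting with the involution $z\mapsto -z$ lifts to a smooth diffeomorphism of the local model $\C^2\#\overline{\C P}^2/\iota''$; this is verified directly in the charts $\psi_1,\psi_2$ from Section~\ref{sec_kummer_surface} using the transition formula~(\ref{eq_transition}). Gluing these local lifts with the obvious smooth descent on the complement of the exceptional set yields $f_{\e,d}$.

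For (ii), property 2 makes $B_{\e,d}$ a $C^1$-small perturbation of the linear Anosov $B$, so $B_{\e,d}$ is itself a $C^\infty$ Anosov diffeomorphism of $\T^2$. Its Jacobian $1+3h'_{\e,d}(x)$ is not identically $1$, so Lebesgue is not preserved. By the smooth Liv\v{s}ic theorem for Anosov systems, a smooth $B_{\e,d}$-invariant probability measure exists if and only if $\log(1+3h'_{\e,d})$ is a smooth $B_{\e,d}$-coboundary, i.e., its Birkhoff sums vanish along every periodic orbit. Since $h_{\e,d}$ retains ample functional freedom away from the neighborhoods prescribed by property 3', it can be chosen to meet this obstruction; alternatively one may post-compose $B_{\e,d}$ by a small volume-restoring diffeomorphism produced by Moser's trick. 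The resulting smooth invariant measure on $\T^2$, tensored with itself and symmetrized under $\iota$, descends to a smooth $f_{\e,d}$-invariant measure $\mu_X$ on $X$.

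For (iii), a $C^\infty$ Anosov diffeomorphism preserving a smooth (hence Lebesgue-equivalent) measure is Bernoulli, by the Sinai-Bowen-Ruelle-Ornstein theory. Thus $B_{\e,d}\oplus B_{\e,d}$ is Bernoulli on $\T^2_\C$. The finite quotient by $\iota$ acts freely off a measure-zero set, preserving the Bernoulli property, and the blow-up modifies only the codimension-two exceptional set. Hence $f_{\e,d}$ is Bernoulli on $(X,\mu_X)$. The hardest step is (ii): arranging for vanishing of the Liv\v{s}ic obstruction. Once a smooth invariant measure is in hand, (i) is a routine gluing argument and (iii) follows from classical Anosov ergodic theory.
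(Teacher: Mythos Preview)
Parts (i) and (iii) match the paper's argument. Part (ii), however, has a genuine gap. Neither of your remedies proves the proposition as stated. The Liv\v{s}ic route requires $\log(1+3h'_{\e,d})$ to be a $B_{\e,d}$-coboundary, i.e., to have vanishing sum over \emph{every} periodic orbit --- an infinite family of constraints that the ``functional freedom'' in $h_{\e,d}$ away from $U_d$ does not (and generically cannot) satisfy, since a generic Anosov diffeomorphism admits no smooth invariant measure at all. The Moser alternative is worse for the present purpose: it replaces $B_{\e,d}$ by a different diffeomorphism, so you are no longer proving anything about the given family $f_{\e,d}$; moreover, post-composition by a Moser correction will in general destroy the oddness and the local linearity near the half-lattice points that you relied on in (i).

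The paper's route to (ii) is entirely different and avoids this detour: it asserts that $B_{\e,d}\oplus B_{\e,d}$ preserves the canonical volume on $\T^2_\C$ induced by $dz_1\wedge dz_2\wedge\overline{dz_1}\wedge\overline{dz_2}$, and then invokes Remark~\ref{rem_form} to see that the push-forward $\sigma_*vol_{\T^2_\C}$ extends across $\E(X)$ to a smooth volume $vol_X$, automatically $f_{\e,d}$-invariant. Thus the smooth invariant measure is not produced by solving a cohomological equation but is simply the descent of the canonical volume through the Kummer construction. Your Jacobian computation $1+3h'_{\e,d}(x)$ is sharp and does deserve to be reconciled with that claim, but the right strategy is to work with the canonical form on $\T^2_\C$ and Remark~\ref{rem_form}, not to try to manufacture an invariant measure after the fact for a map you have already declared non-volume-preserving.
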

\begin{proof}

 It is easy to see that $B_{\e,d}\oplus B_{\e,d}$ fixes points from the finite set $\E(\T^2_\C)$ and that the differential at the points from $\E(\T^2_\C)$ are complex linear maps. Also, $B_{\e,d}(x,y)=B_{\e,d}(-x,-y)$, hence, $B_{\e,d}\oplus B_{\e,d}$ induces a diffeomorphism $f_{\e,d}\colon X\to X$. The fact that $f_{\e,d}$ is smooth boils down to a calculation in charts in the neighborhood of $\E(X)$. This is a routine calculation which we omit. 
 
 By calculating the Jacobian of $B_{\e,d}$ we see that diffeomorphism $B_{\e,d}\oplus B_{\e,d}$ preserves the volume $vol_{\T^2_\C}$ induced by the form $dz_1\wedge dz_2\wedge \overline{dz_1}\wedge \overline{dz_2}$. Remark~\ref{rem_form} implies that $vol_X=\sigma_*vol_{\T^2_\C}$ is induced by $\eta\wedge\overline\eta$ and hence is indeed a smooth volume. However it is clear from the definition that $f_{\e,d}$ preserves $vol_X$.
 
For sufficiently small $\e>0$ the diffeomorphism $B_{\e,d}\oplus B_{\e,d}$ is Anosov and, hence, Bernoulli. Because $vol_X(\E(X))=0$ the dynamical system $(f, vol_X)$ is a measure theoretic factor of $(B_{\e,d}\oplus B_{\e,d}, vol_{\T^2_\C})$ and, hence, is also Bernoulli by work of Ornstein~\cite{Orn}.
 \end{proof}

\begin{prop}
\label{prop_star_satisfied}
 For any sufficiently small $\e>0$  there exist a sufficiently large $d\ge 1$ such that the diffeomorphism $f_{\e,d}\colon X\to X$ satisfies the collection of assumptions $(*)$ from Section~\ref{sec_construction}.
\end{prop}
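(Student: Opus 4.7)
My plan is to verify the three conditions $(*\ref{sec_construction}.\ref{property_71})$--$(*\ref{sec_construction}.\ref{property_73})$ in turn. For $(*\ref{sec_construction}.\ref{property_71})$, simple-connectedness of $X$ combined with the stated K3 intersection form yields, via Hurewicz and Universal Coefficients, $\pi_2(X)\cong H_2(X;\Z)\cong\Z^{22}$, so $m=22$. For $(*\ref{sec_construction}.\ref{property_72})$, the family $\{f_{t\e,d}\}_{t\in[0,1]}$ is an isotopy from $f_B$ to $f_{\e,d}$, hence $\pi_2(f_{\e,d})=\pi_2(f_B)$. Dualizing Proposition~\ref{prop_induced_cohomology} via the natural Hurewicz and Universal Coefficients isomorphisms (and using $B^T=B$) provides a basis of $\pi_2(X)$ in which $\pi_2(f_{\e,d})$ has block-diagonal form $\mathrm{diag}(B^2,\mathrm{id}_{\Z^4},S_{16}^{-1})$. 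Take $k=2$ and $A:=B^2\in SL(2,\Z)$ acting on the first $\Z^2$-summand.

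Condition $(*\ref{sec_construction}.\ref{property_73})$ is the heart of the argument. Set $E_A^c=0$ and let $E_A^s,E_A^u\subset\R^2$ be the eigenspaces of $B^2$ with eigenvalues $\lambda_B^{-2}$ and $\lambda_B^2$ respectively, where $\lambda_B=9+4\sqrt 5$. Equip $\R^2$ with the norm making these eigenvectors orthonormal, so that $\mu_s=\lambda_B^{-2}$ and $\lambda_u=\lambda_B^2$. It remains to exhibit a Riemannian metric on $X$ satisfying $\|Df_{\e,d}\|<\lambda_B^2$ and $m(f_{\e,d})>\lambda_B^{-2}$. My plan is to construct such a metric by partition-of-unity patching of the pushforward $\sigma_*(\text{flat})$ on the complement of a small neighborhood $V$ of $\E(X)$ with a smooth Hermitian metric on $V$ adapted to the holomorphic charts $\psi_1,\psi_2$. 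On $X\setminus V$, the local diffeomorphism $\sigma$ conjugates $f_{\e,d}$ to $B_{\e,d}\oplus B_{\e,d}$, giving operator norm $\le\|B_{\e,d}\|=\lambda_B+O(\e)$ and conorm $\ge\lambda_B^{-1}-O(\e)$, comfortably inside the bounds for small $\e$.

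For control on $V$, choose $d$ large enough that $B_{\e,d}$ agrees with the linear complex map $B_\e:=\begin{pmatrix}13-\e&8\\8-\e&5\end{pmatrix}$ on the preimage of $V$ in $\T^2_\C$; then $f_{\e,d}|_V$ is the holomorphic map induced by $B_\e\oplus B_\e$, and in chart $\psi_1$ it reads
\[
(v,w)\mapsto\left(\frac{(13-\e)v+8}{(8-\e)v+5},\,w\bigl((8-\e)v+5\bigr)^2\right).
\]
On each $\C P^1\subset\E(X)$ the map acts as the M\"obius transformation attached to $B_\e$, whose derivative at the repelling fixed point equals $\lambda_\e^+/\lambda_\e^-$ (with $\lambda_\e^\pm$ the eigenvalues of $B_\e$, satisfying $\lambda_\e^+\lambda_\e^-=1+3\e$ and $\lambda_\e^++\lambda_\e^-=18-\e$). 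A first-order perturbation gives $\lambda_\e^+=\lambda_B-c\e+O(\e^2)$ with $c=\tfrac12(1+3/\sqrt 5)>0$, whence $\lambda_\e^+/\lambda_\e^-=(\lambda_\e^+)^2/(1+3\e)<\lambda_B^2$ and $(\lambda_\e^+)^2<\lambda_B^2$ strictly; symmetrically the conorm exceeds $\lambda_B^{-2}$ on $\E(X)$. By continuity of $Df_{\e,d}$ and compactness of $\E(X)$ (a finite disjoint union of copies of $\C P^1$) the strict bounds persist on an open neighborhood of $\E(X)$, which I take for $V$. Since the operator norm (resp.\ conorm) with respect to a convex combination of Riemannian metrics is bounded above (resp.\ below) by the maximum (resp.\ minimum) of the individual values, the partition-of-unity patching preserves both strict inequalities globally.

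The principal obstacle is the coordinated choice of the metric on $V$, the neighborhood $V$ itself, and the parameters $\e$ and $d$: one must fix a Hermitian metric on $V$ for which the derivative estimates at fixed points of $f_{\e,d}|_{\E(X)}$ translate to intrinsic bounds on $\|Df_{\e,d}\|$, shrink $V$ so these bounds propagate with a uniform margin, choose $\e$ small enough that the strict perturbation inequalities survive the patching, and finally choose $d$ large enough that $B_{\e,d}=B_\e$ on the preimage of $V$. This interlocking hierarchy of choices is the only delicate point; once each parameter is fixed in the order $V\to\e\to d$, the compactness/continuity argument closes the proof.
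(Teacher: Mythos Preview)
Your verification of $(*\ref{sec_construction}.\ref{property_71})$ and $(*\ref{sec_construction}.\ref{property_72})$ is fine and matches the paper. The real issue is $(*\ref{sec_construction}.\ref{property_73})$, where your argument has two genuine gaps.

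\textbf{First gap: the derivative bound on $\E(X)$.} You only compute the derivative of the M\"obius map at the \emph{repelling fixed point} of $\C P^1$, and from this you claim the strict bounds hold on all of $\E(X)$. But $Df_{\e,d}$ at a point $x\in\C P^1$ acts on the full tangent space $T_xX$, not just on $T_x\C P^1$. In the chart $\psi_1$ your own formula gives the normal ($\partial_w$) derivative as multiplication by $\bigl((8-\e)v+5\bigr)^2$, which is unbounded in the Euclidean metric of the chart as $v\to\infty$. So ``a Hermitian metric adapted to the holomorphic charts'' is not enough: you need a very specific metric in which the tangential and normal expansion rates balance. This is precisely the content of Lemma~\ref{lemma_cp1}, where the paper constructs the warped metric $Q(v)\,dv\,\overline{dv}+Q(v)^{-1}dw\,\overline{dw}$ with $Q(v)=(1+|v|^2)^{-2}$ and verifies $\mu^{-2}\le\|DC_*\|\le\mu^2$ at every point of $\C P^1$. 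Your proposal skips this construction entirely.

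\textbf{Second gap: the patching claim.} The assertion that the operator norm with respect to a convex combination $g=\rho g_1+(1-\rho)g_2$ is bounded by $\max(\|Df\|_{g_1},\|Df\|_{g_2})$ is false when $\rho$ is not $f$-invariant: the numerator $\|Df_xv\|_{g_{f(x)}}$ uses the weight $\rho(f(x))$ while the denominator $\|v\|_{g_x}$ uses $\rho(x)$, and there is no reason these combine favorably. In the paper this is acknowledged: on the collar $\B_d$ where the patching happens one only gets a crude bound $K^{-1}\le\|Df_{\e,d}\|\le K$ with $K$ \emph{not} below $\lambda^2$. The role of large $d$ is then completely different from what you describe: it is not merely to make $B_{\e,d}$ linear on the preimage of $V$, but to make orbits visit the bad collar so rarely (at most four times per long block of good iterates) that a Mather-type adapted metric absorbs the factor $K$ and yields the global bound~(\ref{eq_lambda2_pinching}). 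Your order of choices $V\to\e\to d$ cannot produce this, and indeed is internally inconsistent since your $V$ was chosen by a continuity argument that already depends on $\e$.
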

The proof of this proposition requires some lemmas. 

Let $C\colon \C^2\to\C^2$ be the automorphism given by $(z_1, z_2)\mapsto (\mu z_1, \mu^{-1} z_2)$, $\mu>1$, and let 
$$
C_*\colon  \C^2\#\overline{\C P}^2/\iota''\to  \C^2\#\overline{\C P}^2/\iota''
$$ 
be the automorphism induced by $C$ on the quotient of the blow up. (Recall that $\iota''$ is induced by $(z_1, z_2)\mapsto (-z_1, -z_2)$.) It is easy to see that $C_*$ leaves the projective line $\C P^1\subset \C^2\#\overline{\C P}^2/\iota''$ over $(0,0)$ invariant.
\begin{lemma}\label{lemma_cp1}
There exists a Riemannian metric $k$ on $\C^2\#\overline{\C P}^2/\iota''$ such that for any $x\in\C P^1$ and any $u\in T_x(\C^2\#\overline{\C P}^2/\iota'')$ we have
$$
\mu^{-2}\|u\|_k\le\|D_xC_*(u)\|_k\le\mu^2\|u\|_k,
$$
where $\|\cdot\|_k=\sqrt{k(\cdot,\cdot)}$.
\end{lemma}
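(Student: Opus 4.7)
The plan is to work in the two holomorphic charts $\psi_1,\psi_2$ that cover a neighborhood of $\C P^1\subset\C^2\#\overline{\C P}^2/\iota''$ and to assemble $k$ from explicit Fubini--Study-type expressions in each chart. First I would compute $C_*$ in each chart: substituting the formulas of Remark~\ref{rem_atlas} into $C(z_1,z_2)=(\mu z_1,\mu^{-1}z_2)$ and reading off the new chart coordinates, one finds that in $\psi_1$ the map $C_*$ is the linear map $(v,w)\mapsto(\mu^2 v,\mu^{-2}w)$, while in $\psi_2$ it is $(v',w')\mapsto(\mu^{-2}v',\mu^2 w')$. In particular $\C P^1=\{w=0\}=\{w'=0\}$ is $C_*$-invariant, and at every point of $\C P^1$ the differential $DC_*$ is diagonal with eigenvalues $\mu^{\pm 2}$ in the basis $(\partial_v,\partial_w)$ (respectively $(\partial_{v'},\partial_{w'})$), preserving the splitting $T|_{\C P^1}=T\C P^1\oplus N$ into tangent and normal directions to the exceptional curve.

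Next I would write down the candidate local Riemannian metrics
\[
k_1=\frac{dv\,d\bar v}{(1+|v|^2)^2}+(1+|v|^2)^2\,dw\,d\bar w,\qquad
k_2=\frac{dv'\,d\bar v'}{(1+|v'|^2)^2}+(1+|v'|^2)^2\,dw'\,d\bar w'
\]
in the respective chart domains. Using the transition $\psi_2^{-1}\psi_1(v,w)=(1/v,v^2 w)$ from~(\ref{eq_transition}) one gets $dv=-dv'/v'^2$ and, along $\{w'=0\}$, $dw=v'^2\,dw'$ (the cross-term $2v'w'\,dv'$ vanishes on $\C P^1$), and a direct substitution shows $k_1|_{\C P^1}=k_2|_{\C P^1}$ as Hermitian forms on $T|_{\C P^1}$. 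Now pick a smooth $\alpha\colon \C P^1\to[0,1]$ equal to $1$ in a neighborhood of $[0:1]$ and $0$ in a neighborhood of $[1:0]$, extend it to a tubular neighborhood $\mathcal T$ of $\C P^1$ via projection, and set $k=\alpha k_1+(1-\alpha)k_2$ on $\mathcal T$. This is smooth because near each fixed point of $C_*|_{\C P^1}$ only one summand contributes, and a convex combination of smooth metrics is smooth elsewhere on $\mathcal T$. Extend $k$ to all of $\C^2\#\overline{\C P}^2/\iota''$ using one further partition of unity against any background metric.

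To verify the bounds, observe that for $p\in\C P^1$ with $\psi_1$-coordinate $v_0$ one has $k(p)=k_1(p)$ and $k(C_*(p))=k_1(C_*(p))$ by the matching on $\C P^1$, and $\partial_v,\partial_w$ are $k$-orthogonal eigenvectors of $DC_*$ with eigenvalues $\mu^{\pm 2}$. A direct computation then yields
\[
\frac{\|DC_*\partial_v\|_k}{\|\partial_v\|_k}=\mu^2\cdot\frac{1+|v_0|^2}{1+\mu^4|v_0|^2},\qquad
\frac{\|DC_*\partial_w\|_k}{\|\partial_w\|_k}=\mu^{-2}\cdot\frac{1+\mu^4|v_0|^2}{1+|v_0|^2}.
\]
Setting $t=|v_0|^2\ge 0$, the first expression is monotonically decreasing from $\mu^2$ at $t=0$ to $\mu^{-2}$ at $t=\infty$, and the second is monotonically increasing from $\mu^{-2}$ to $\mu^2$, so both lie in $[\mu^{-2},\mu^2]$ for $\mu>1$. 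Since $DC_*$ is $k$-symmetric with orthogonal eigenbasis, an arbitrary unit $u=a\partial_v+b\partial_w$ satisfies $\|DC_* u\|_k^2$ equal to a convex combination of the squares of the two ratios above, hence also in $[\mu^{-4},\mu^4]$, giving the claim.

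I expect the main obstacle to be calibrating the local conformal factors so that both the tangent and the normal stretch factors stay inside $[\mu^{-2},\mu^2]$ while simultaneously ensuring that the two chart expressions agree on $T|_{\C P^1}$, so that patching with a partition of unity preserves the bounds (a naive Euclidean choice in each chart fails this matching catastrophically, since the change of coordinates $(v',w')=(1/v,v^2w)$ rescales fibers by $|v|^{\pm 4}$). The reciprocal Fubini--Study weights $(1+|v|^2)^{\pm 2}$ on the two summands are what resolve both constraints at once.
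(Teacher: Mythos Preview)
Your proof is correct and follows essentially the same route as the paper: the same chart computation of $C_*$, the same Fubini--Study-weighted Hermitian metric $Q(v)\,dv\,d\bar v+Q(v)^{-1}dw\,d\bar w$ with $Q(v)=(1+|v|^2)^{-2}$, the same transition check along $\{w=0\}$, and the same monotonicity estimate for the eigen-ratios. The only cosmetic difference is that your partition-of-unity patching is redundant---since you already verified $k_1|_{\C P^1}=k_2|_{\C P^1}$ as forms on $T|_{\C P^1}$, the metric is globally well-defined on $\C P^1$ without any cutoff, and one may simply extend it arbitrarily off $\C P^1$ (as the paper does).
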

\begin{proof}
Clearly it is enough to define $k$ on $ \C P^1\subset \C^2\#\overline{\C P}^2/\iota''$. (Then we extend it in an arbitrary way.)

Recall that in Section~\ref{sec_kummer_surface} we covered $\C^2\#\overline{\C P}^2/\iota''$ by two charts $\psi_1$ and $\psi_2$. Note that in both charts $\C P^1$ is given by $w=0$. We use Remark~\ref{rem_atlas} to calculate $C_*$ in charts
\begin{multline*}
C_{*,\psi_1}\colon(v,w)\stackrel{\psi_1}{\longrightarrow} (v\sqrt{w},\sqrt{w}, [v:1])\\
\shoveright{\stackrel{C_*}{\longrightarrow}(\mu v\sqrt{w},\mu^{-1}\sqrt{w},[\mu^2v:1])\stackrel{\psi_1^{-1}}{\longrightarrow}(\mu^2v,\mu^{-2}w)}\\
\shoveleft{C_{*,\psi_2}\colon(v,w)\stackrel{\psi_2}{\longrightarrow}(\sqrt{w},v\sqrt{w}, [1:v])}\\
\stackrel{C_*}{\longrightarrow}(\mu \sqrt{w},\mu^{-1}v\sqrt{w},[1:\mu^{-2}v])\stackrel{\psi_2^{-1}}{\longrightarrow}(\mu^{-2}v,\mu^{2}w)
\end{multline*}
Let us define a Hermitian metric in the chart $\psi_1$. Given a point $(v,0)$ define
\begin{equation}\label{eq_metric}
h_{(v,0)}=Q(v)dv\overline{dv}+Q(v)^{-1}dw\overline{dw},
\end{equation}
where
$$
Q(v)=\left(\frac{1}{1+|v|^2}\right)^2.
$$
Define a Hermitian metric in the chart $\psi_2$ by the same formula~(\ref{eq_metric}). The fact that these definitions are consistent can be seen from the following calculation that uses the transition formula~(\ref{eq_transition})
\begin{multline*}
(\psi_2^{-1}\circ\psi_1)^*h_{(v,0)}=Q(1/v)d(1/v)\overline{d(1/v)}+Q(1/v)^{-1}dv^2w\overline{dv^2w}\\
=Q(1/v)\frac{1}{|v|^4}dv\overline{dv}+Q(1/v)^{-1}|v|^4dw\overline{dw}=h_{(v,0)},
\end{multline*}
where the second equality follows from
$$
d(v^2w)=v^2dw+2vwdv=v^2dw
$$
when $w=0$; and the
last equality follows from the following identity
$$
Q(1/v)=|v|^4Q(v).
$$
Therefore, (\ref{eq_metric}) gives a well-defined Hermitian metric $h$ on $\C P^1\subset \C^2\#\overline{\C P}^2/\iota''$. Define the Riemannian metric $k$ as real part of $h$
$$
k=\frac{h+\bar h}{2}
$$
Notice that in charts $k$ is a warped product. Thus, we only need to prove the posited inequalities for the real parts of dual vectors $e_v$ and $e_w$. We check the inequality in the chart $\psi_1$. The calculation in the chart $\psi_2$ is completely analogous.
\begin{multline*}
\frac{k_{C_{*,\psi_1}(v,0)}(DC_{*,\psi_1}(e_v),DC_{*,\psi_1}(e_v))}{k_{(v,0)}(e_v,e_v)}=\frac{k_{(\mu^2v,0)}(\mu^2e_v,\mu^2e_v)}{k_{(v,0)}(e_v,e_v)}\\
\shoveright{=
\frac{Q(\mu^2v)\mu^4}{Q(v)}=\left(\frac{\mu^2+|\mu v|^2}{1+|\mu^2 v|^2}\right)^2;}\\
\shoveleft{\frac{k_{C_{*,\psi_1}(v,0)}(DC_{*,\psi_1}(e_w),DC_{*,\psi_1}(e_w))}{k_{(v,0)}(e_w,e_w)}=\frac{k_{(\mu^2v,0)}(\mu^{-2}e_w,\mu^{-2}e_w)}{k_{(v,0)}(e_w,e_w)}}\\
=
\frac{Q(\mu^2v)^{-1}\mu^{-4}}{Q(v)^{-1}}
=\left(\frac{\mu^2+|\mu v|^2}{1+|\mu^2 v|^2}\right)^{-2};
\end{multline*}
Finally, the posited inequalities follow from the following elementary estimate
$$
\mu^{-2}\le \frac{\mu^2+|\mu v|^2}{1+|\mu^2 v|^2}\le \mu^2.
$$
\end{proof}

Let $g_{\T^2_\C}=\textup{Re}(dz_1\overline{dz_1}+dz_2\overline{dz_2})$ be the standard flat metric on $\T^2_\C$  and let
$$
g_{d,\T^2_\C}=d^2g_{\T^2_\C}\;\;\;\textup{for}\;\; d\ge 1.
$$
We will write $\|\cdot\|_{d,\T^2_\C}$ for the induced norms.

Let $\lambda>1$ be the larger eigenvalue of $B$.
The following lemma follows immediately from property~2 of $h_{\e,d}$ and the definition of $B_{\e,d}$.
\begin{lemma}\label{lemma_lambda_e}
There exist a function $\lambda_\e$, $\e\ge 0$, such that $\lambda_\e\to \lambda$ as $\e\to 0$ and
$$
\lambda_\e^{-1}\|u\|_{{d, \T^2_\C}}\le\|D(B_{\e,d}\oplus B_{\e,d})(u)\|_{d,\T^2_\C}\le\lambda_\e\|u\|_{d,\T^2_\C}
$$
for all $d\ge 1$.
\end{lemma}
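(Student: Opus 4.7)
The plan is to reduce the statement to a uniform-in-$d$ bound on the operator norm of the Jacobian, which in turn reduces to an elementary continuity argument.

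First I would observe the crucial scale invariance. Since $g_{d,\T^2_\C}=d^2 g_{\T^2_\C}$, we have $\|u\|_{d,\T^2_\C}=d\|u\|_{\T^2_\C}$ at every point, and hence
\[
\frac{\|D(B_{\e,d}\oplus B_{\e,d})(u)\|_{d,\T^2_\C}}{\|u\|_{d,\T^2_\C}}=\frac{\|D(B_{\e,d}\oplus B_{\e,d})(u)\|_{\T^2_\C}}{\|u\|_{\T^2_\C}}.
\]
Therefore it suffices to bound the ratio with respect to the unscaled flat metric, independently of $d$.

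Next I would compute the Jacobian explicitly. From the definition of $B_{\e,d}$,
\[
DB_{\e,d}(x,y)=\begin{pmatrix} 13-h_{\e,d}'(x) & 8 \\ 8-h_{\e,d}'(x) & 5 \end{pmatrix}=B-h_{\e,d}'(x)\begin{pmatrix} 1 & 0 \\ 1 & 0 \end{pmatrix},
\]
and $D(B_{\e,d}\oplus B_{\e,d})$ is the block-diagonal matrix with two copies of $DB_{\e,d}$ on the diagonal (at the appropriate points). Its operator norm with respect to the flat metric equals that of $DB_{\e,d}$. Property~2 of $h_{\e,d}$ (i.e. $|h_{\e,d}'|\le \e$) gives, uniformly in $d\ge 1$ and in the base point, the bound $\|DB_{\e,d}-B\|\le c\e$ for an absolute constant $c$.

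Finally I would invoke the continuity of singular values. Since $B\in SL(2,\Z)$ has singular values $\lambda$ and $\lambda^{-1}$, setting
\[
\lambda_\e \;\stackrel{\textup{def}}{=}\; \sup_{d\ge 1}\,\sup_{(x,y)\in\T^2}\max\bigl(\|DB_{\e,d}(x,y)\|,\,\|DB_{\e,d}(x,y)^{-1}\|\bigr),
\]
the uniform estimate $\|DB_{\e,d}-B\|\le c\e$ together with continuity of singular values under matrix perturbations yields $\lambda_\e=\lambda+O(\e)$, in particular $\lambda_\e\to\lambda$ as $\e\to 0$. The desired two-sided bound is then immediate from the definition of $\lambda_\e$ and the block-diagonal structure of $D(B_{\e,d}\oplus B_{\e,d})$. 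The only mildly delicate point is ensuring uniformity in $d$; this is guaranteed because the bound $|h_{\e,d}'|\le \e$ from property~2 does not deteriorate with $d$, even though $h_{\e,d}$ itself is obtained by lifting through the $d$-fold cover.
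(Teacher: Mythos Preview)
Your argument is correct and is exactly the approach the paper has in mind: the paper simply states that the lemma ``follows immediately from property~2 of $h_{\e,d}$ and the definition of $B_{\e,d}$,'' and you have unpacked precisely those two ingredients (the uniform bound $|h_{\e,d}'|\le\e$ and the explicit Jacobian formula), together with the trivial observation that the scaling $g_{d,\T^2_\C}=d^2 g_{\T^2_\C}$ cancels in the ratio. One minor imprecision: the two diagonal blocks of $D(B_{\e,d}\oplus B_{\e,d})$ are evaluated at different base points, so the operator norm equals the \emph{larger} of the two block norms rather than ``that of $DB_{\e,d}$''; this does not affect the argument since your $\lambda_\e$ is defined as a supremum over all base points anyway.
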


For each $d\ge 1$ consider the open set
$$
\U_d=\left(U_d\cup \left(U_d+\frac12\right)\right)^4\subset\T^2_\C
$$
(Recall that $U_d$ is $(const/d)$-neighborhood of $0$ in $S^1$ defined in the statement of property~$3'$ of $h_{\e,d}$.) Clearly $\U_d$ is a neighborhood of $\E(\T^2_\C)$ which has 16 connected components. We will write $\U_d(p)$ for the connected component of $p\in\E(\T^2_\C)$.
\begin{remark}\label{rem_u_isometric}
By definition the neighborhoods $(\U_d(p), g_{d,\T^2_\C})$ are all pairwise isometric for all $p\in\E(\T^2_\C)$ and $d\ge 1$.
\end{remark}

Let $\mu_\e>1$ be the larger eigenvalue of the matrix
$
\left(
\begin{smallmatrix}
 13-\e & 8\\
 8-\e & 5
\end{smallmatrix}\right).
$
We have
\begin{equation}
\label{eq_mu_lambda}
\mu_\e<\lambda\;\;\;\textup{for} \;\; \e>0.
\end{equation}
The following lemma follows immediately from our definition of $B_{\e,d}\oplus B_{\e,d}$.
\begin{lemma}
\label{lemma_linear}
Let $d\ge 1$ and let $p\in\E(\T^2_\C)$. Identify $\U_d(p)$ with a neighborhood of $(0,0)$ in $\C^2$ in the obvious way. Then the restriction 
$B_{\e,d}\oplus B_{\e,d}|_{\U_d(p)}$ is a complex-linear map, which is given by
$$
(z_1,z_2)\mapsto(\mu_\e z_1,\mu_\e^{-1}z_2)
$$
in the basis of eigenvectors.
\end{lemma}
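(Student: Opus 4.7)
The plan is a direct computation based on property~$3'$ of $h_{\e,d}$, followed by a check that the resulting real-linear map on $\R^4$ corresponds to a complex-linear map on $\C^2$.

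First, I would unwind property~$3'$: on $U_d$ one has $h_{\e,d}(x)=\e x$, and on $U_d+\tfrac12$, writing $x=\tfrac12+x'$ with $x'\in U_d$, one has $h_{\e,d}(x)=\e x'$. Substituting into the defining formula for $B_{\e,d}$ shows that on each of the four connected components of $(U_d\cup(U_d+\tfrac12))^2\subset\T^2$ the map $B_{\e,d}$ is affine with constant linear part
$$M_\e=\left(\begin{smallmatrix} 13-\e & 8\\ 8-\e & 5\end{smallmatrix}\right),$$
and a direct check shows that the affine part fixes the unique fixed point $q\in\{0,\tfrac12\}^2$ of $B$ in that component. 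Consequently, in coordinates centered at $q$, the restriction of $B_{\e,d}$ is the genuine $\R$-linear map with matrix $M_\e$.

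Next, under the identification $\T^2_\C\cong\T^2\times\T^2$ that realizes $B_\C$ as $B\oplus B$, the exceptional set $\E(\T^2_\C)$ is the product of the fixed-point set of $B$ with itself, each $\U_d(p)$ is a product of two of the components above, and $B_{\e,d}\oplus B_{\e,d}$ acts componentwise. Thus in real coordinates centered at $p$, the restriction of $B_{\e,d}\oplus B_{\e,d}$ to $\U_d(p)$ is the $\R$-linear block map $M_\e\oplus M_\e$ on $\R^2\oplus\R^2\cong\R^4$. Since the identification $\T^2_\C\cong\T^2\times\T^2$ arranges the real coordinates so that $z_1=x_1+ix_3$ and $z_2=x_2+ix_4$, any real map of the form $N\oplus N$ with $N$ a real $2\times 2$ matrix commutes with the complex structure $J$ and coincides with $N$ acting $\C$-linearly on $\C^2$. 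Applying this with $N=M_\e$ shows the restriction is complex-linear with matrix $M_\e$. For small $\e$, $M_\e$ has two distinct real eigenvalues, the larger being $\mu_\e$, so diagonalizing over $\R$ and using the resulting eigenbasis as a complex basis of $\C^2$ yields the desired normal form $(z_1,z_2)\mapsto(\mu_\e z_1,\mu_\e^{-1}z_2)$.

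The main difficulty is one of bookkeeping rather than mathematical content: one must fix the identification $\T^2_\C\leftrightarrow\T^2\times\T^2$ so that the block-diagonal real map $M_\e\oplus M_\e$ corresponds to the complex-linear matrix $M_\e$ on $\C^2$, and one must verify that the affine translations coming from property~$3'$ place $p$ exactly at the origin after centering, so that the restriction is genuinely linear and not merely affine.
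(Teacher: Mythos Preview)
Your proof is a careful unwinding of the definitions, which is exactly what the paper intends---its own ``proof'' is the single sentence that the lemma follows immediately from the definition of $B_{\e,d}\oplus B_{\e,d}$. One minor point worth flagging: your final step tacitly assumes $\det M_\e=1$ in order to get the smaller eigenvalue equal to $\mu_\e^{-1}$, but in fact $\det M_\e=(13-\e)\cdot 5-8(8-\e)=1+3\e$, so the second eigenvalue is $(1+3\e)/\mu_\e$ rather than exactly $\mu_\e^{-1}$; this is a harmless slip in the paper's own statement and does not affect either your argument or the subsequent use of the lemma.
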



\begin{proof}[Proof of Proposition~\ref{prop_star_satisfied}]
Start by fixing a sufficiently small $\e>0$ such that $\mu_\e\in(1,\lambda)$ and 
\begin{equation}
\label{eq_lambda_e_estimate}
\lambda_\e<\lambda^2,
\end{equation}
where $\lambda_\e$ comes from Lemma~\ref{lemma_lambda_e}. Consider diffeomorphism $B_{\e,d}\oplus B_{\e,d}$ and open sets $\U_d$ and $\U_d(p)$ as described above. By Remark~\ref{rem_u_isometric} each $(\U_d(p), g_{d,\T^2_\C})$ is isometric to $(\U, g)$, where $\U$ is a neighborhood of $(0,0)$ is $\C^2$ and $g=\textup{Re}( dz_1\overline{dz_1}+dz_2\overline{dz_2})$. Using Lemma~\ref{lemma_linear} and the fact that the basis of eigenvectors for $B_{\e,d}\oplus B_{\e,d}|_{\U_d(p)}$ is orthogonal, we can precompose with a rotation and obtain another isometric identification $\U_d(p)= \U$ under which $B_{\e,d}\oplus B_{\e,d}|_{\U_d(p)}$ becomes
$$
C\colon(z_1,z_2)\mapsto(\mu_\e z_1, \mu_\e^{-1}z_2);
$$
that is, the following diagram commutes
$$
\begin{CD}
 \U_d(p) @>{B_{\e,d}\oplus B_{\e,d}}>> \U_d(p)\\
 @| @|\\
  \U @>C>> \U
\end{CD}
$$
This diagram induces the commutative diagram
\begin{equation}
\label{eq_comm_diagram}
\begin{CD}
 \U''_d(p) @>{f_{\e,d}}>> \U''_d(p)\\
 @| @|\\
  \U'' @>C_*>> \U''
\end{CD}
\end{equation}
where $\U''$ is the quotient of the blow up~(\ref{eq_u_2primes}) and $\U''_d(p)$ are the corresponding neighborhoods of the 16 copies of $\C P^1$ in $X$.
(Note that the identification $\U''_d(p)=\U''$ is not isometric yet.)

Applying Lemma~\ref{lemma_cp1} to $C_*$ yields a Riemannian metric $k$ on a neighborhood of $\C P^1\subset \U''$. Extend $k$ to $\U''$ in an arbitrary way. By~(\ref{eq_mu_lambda}) we can pick a number $\bar\mu_\e\in(\mu_\e,\lambda)$. Then, by continuity, Lemma~\ref{lemma_cp1} implies that for a sufficiently small neighborhood $\V_1\subset \U''$ of $\C P^1$ we have
\begin{equation}
 \label{eq_k_estimate}
(\bar\mu_\e)^{-2}\|u\|_k\le \|D_xC_*(u)\|_k\le \bar\mu_\e^2\|u\|_k
\end{equation}
for $x\in\V_1$ and $u\in T_x(\U'')$.

Next choose a neighborhood $\V_2\supset\V_1$ such that the collar $\V_2\backslash\V_1$ has the following properties:
\begin{enumerate}
 \item any orbit of $C_*$ visits the collar $\V_2\backslash\V_1$ at most twice;
 \item any orbit of $C_*$ that visits $\V_1$ also visits the collar $\V_2\backslash\V_1$ exactly twice --- once when entering and once when leaving $\V_1$; in particular, for any $x\in\U''$ $(x,f(x))\notin (\V_1\times \U''\backslash \V_2)\cup (\U''\backslash \V_2\times\V_1)$.
\end{enumerate}
 Such choice of $\V_2$ is possible due to hyperbolicity of $C$. Also choose a smooth function $\rho\colon \U''\to [0,1]$ such that $\rho|_{\V_1}=1$ and $\rho|_{\U''\backslash \V_2}=0$. Define Riemannian metric $\tilde g$ on $\U''$ by
$$
\tilde g=\rho k+(1-\rho)(\sigma_\U)_*g.
$$
Here $\sigma_\U\colon \U\backslash(0,0)\to\U''$ is $(z_1,z_2)\mapsto (z_1,z_2,\ell(z_1,z_2))$. 

Finally, for each $d\ge1$ decompose $X$ as the union of 16 neighborhoods $\U''_d(p)$ and the complement $X\backslash \U''_d$ and define the sequence of Riemannian metrics
$$
g_{d,X}=
\begin{cases}
 \tilde g\;\;\;\textup{on} \;\;\U''_d(p)\\
 \sigma_*g_d\;\;\;\textup{on}\;\; X\backslash \U''_d
\end{cases}
$$
In this definition we used the identifications $\U''_d(p)=\U''$ and the push-forward $\sigma_*g_d$ by $\sigma$~(\ref{eq_sigma}) is well defined on the complement because the involution $\iota$ is an isometry of $(\T^2_\C, g_{d,\T^2_\C})$. Because $\tilde g=(\sigma_\U)_*g$ near the boundary of $\U''$ this definition, indeed, gives a smooth Riemannian metric on $X$. 

Denote by $\V_d$ the union of 16 copies of $\V_1$ in $(X,g_{d,X})$, denote by $\B_d$ the union of 16 copies of the collar $\V_2\backslash\V_1$ in $(X,g_{d,X})$ and let $\G_d=X\backslash(\V_d\cup\B_d)$.

We write $\|\cdot\|_{\e, d}$ for the norm induced by $g_{d,X}$. We have the following estimates:
\begin{enumerate}
 \item if $\{x, f_{\e, d}(x)\}\subset \G_d$ then
 $$
 \lambda_\e^{-1}\|u\|_{d, X}\le\|D_xf_{\e,d}(u)\|_{d,X}\le\lambda_\e\|u\|_{d,X};
 $$
  \item if $\{x, f_{\e, d}(x)\}\subset \V_d$ then
 $$
 \bar\mu_\e^{-2}\|u\|_{ d, X}\le\|D_xf_{\e, d}(u)\|_{d,X}\le\bar\mu_\e^2\|u\|_{d,X};
 $$
 \item otherwise
 $$
  K^{-1}\|u\|_{d,X}\le\|D_xf_{\e,d}(u)\|_{d,X}\le K\|u\|_{d,X};
  $$
\end{enumerate}
where  $K$ is a constant which is independent of $d$. Property~1 follows from Lemma~\ref{lemma_lambda_e}. Property~2 follows from~(\ref{eq_k_estimate}). Property~3 is due to the fact that in the collars both the dynamics ($C_*$) and the metric ($\tilde g$) do not depend on $d$. Properties~1 and~2 together with our choice of $\bar\mu_\e$ and~(\ref{eq_lambda_e_estimate}) imply that
$$
 \lambda^{-2}\|u\|_{d,X}<\|D_xf_{\e, d}(u)\|_{d,X}<\lambda^2\|u\|_{d,X}
$$
holds whenever $\{x,f_{\e, d}(x)\}\subset \V_d\cup\G_d$.

Hence, the only region without effective control on $Df_{\e, d}$ is $\B_d$, \ie when a point enters a collar or leaves a collar. However, by our construction the neighborhoods $\U''_d$ of 16 copies of $\C P^1$ in $X$ are nested, moreover,
$$
\bigcap_{d\ge 1}\U''_d(p)=\C P^1(p),
$$
where $\C P^1(p)$ is the projective line above $p\in\E(\T^2_\C)$. It follows that for large $d$ the neighborhood $\U''_d$ is (topologically) small and it takes a lot of time for an orbit of $f_{d,X}$ to travel from a neighborhood $\U''_d(p_1)$ to another neighborhood $\U''_d(p_2)$. When an orbit travels through a neighborhood $\U''_d(p)$ it meets $\B_d$ at most twice and the rest of the time it spends in $\V_d\cup \G_d$. Hence, when an orbit travels through a neighborhood $\U''_d(p)$ we may have only up to four iterates when the differential is pinched between $K^{-1}$ and $K$. These observations together with the standard adapted metric construction (see \eg~\cite{Math}) imply that there exists $d=d(K)$ and an adapted metric $g^{\textup{adapted}}_{d,X}$ such that
\begin{equation}
\label{eq_lambda2_pinching}
 \lambda^{-2}\|u\|<\|D_xf_{d,X}(u)\|<\lambda^2\|u\|,
\end{equation}
for all $x\in X$ and $u\in T_xX$, where $\|\cdot\|$ is the norm induced by $g^{\textup{adapted}}_{d,X}$.

We can check now that $(X,\|\cdot\|)$ and $f_{d,X}$ satisfy assumption $(*)$ of Section~\ref{sec_construction}. Indeed, $\pi_2(X)\cong H_2(X;\Z)\cong\Z^{22}$ verifying $(*$\ref{sec_construction}.\ref{property_71}$)$. By Proposition~\ref{prop_induced_cohomology}, $\pi_2(f_{d,X})=\pi_2(f_B)=(B^2, Id_{\Z^{20}})$ verifying $(*$\ref{sec_construction}.\ref{property_72}$)$ with $k\ge 2$. Finally, the inequalities of $(*$\ref{sec_construction}.\ref{property_73}$)$ also hold true because $B^2$ has eigenvalues $\lambda^{-2}, \lambda^2$ and we have verified~(\ref{eq_lambda2_pinching}).
\end{proof}

\section{Proof of the Main Theorem~\ref{theorem_main}}
\label{sec_proof}
Let $X$ be the Kummer surface and let $
B=
\left(
\begin{smallmatrix}
 13 & 8\\
 8 & 5
\end{smallmatrix}\right).
$
Then by Propositions~\ref{prop_induced_cohomology},~\ref{prop_does_induce} and~\ref{prop_star_satisfied} there exists a volume preserving, Bernoulli diffeomorphism $f\colon X\to X$ which verifies the collection of assumptions $(*)$ of Section~\ref{sec_construction}. Moreover, because $\pi_2(f)=(B^2,id_{\Z^{20}})$ by Proposition~\ref{prop_induced_cohomology}, we can take any $k$ in $[2,20]$ and the splitting $\Z^{22}=\Z^k\oplus\Z^{22-k}$  will verify~$(*$\ref{sec_construction}.\ref{property_72}$)$ and $(*$\ref{sec_construction}.\ref{property_73}$)$. The matrix $A\in SL(k,\Z)$ from $(*$\ref{sec_construction}.\ref{property_73}$)$ is given by
$$
A=diag(B^2,id_{\Z^{k-2}}).
$$

By Lemma~\ref{lemma_h_exist} there exist a map $h\colon X\to B\T^k$ such that $\pi_2(h)\colon\Z^k\oplus\Z^{m-k}\to\Z^k$ is the projection onto the first summand $\Z^k$. Let $\pi_h\colon E_h\to X$ be the pullback bundle $h^*E\T^k$. By Proposition~\ref{prop_sc} the total space $E_h$ is simply connected. 
Also consider the diagram 
$$
\xymatrix{
X\ar[r]^f\ar[d]_h & X\ar[d]^h\\
B\T^k\ar[r]^{\rho_A} & B\T^k
} 
$$
Recall that, by Proposition~\ref{prop_key}, $\pi_2(\rho_A)=A$. Together with~$(*$\ref{sec_construction}.\ref{property_72}$)$, this implies that the above diagram commutes on the level of $\pi_2$, and hence homotopy commutes by Proposition~\ref{prop_commute_equiv}. Then Theorem~\ref{thm_Amap} applies and yields an $A$-map $F\colon E_h\to E_h$. By Theorem~\ref{thm_partial_hyperbolicity} the diffeomorphism $F$ is partially hyperbolic. Because $F$ is an $A$-map over a volume preserving diffeomorphism, Fubini's Theorem implies that $F$ is also volume preserving. 

To establish ergodicity start by removing the 3-skeleton of $X$ and all its iterates under $f$. We obtain a subset $\bar X\subset X$ of full volume. Over $\bar X$ the bundle trivializes and the $A$-map $F$ takes the form
$$
F(x,y_1,y_2)=(f(x), B^2(y_1)+\alpha(x), y_2+\beta(x)),
$$
where $(y_1,y_2)\in\T^2\times\T^{k-2}=\T^k$. After making the coordinate change $(x,y_1,y_2)\mapsto (x, y_1+u(x), y_2)$, where $u(x)=(Id-B^2)^{-1}\alpha(x)$, $F$ takes the form
$$
F(x,y_1,y_2)=(f(x), B^2(y_1), y_2+\beta(x))
$$
Recall that $f$ is Bernoulli, $B^2\colon\T^2\to\T^2$ is Anosov and, hence, is also Bernoulli. Because the product of two Bernoulli automorphisms is also Bernoulli we can write
$$
F(z,y_2)=(T(z), y_2+\beta(z)),
$$
where $z=(x,y_1)$, $\beta(z)=\beta(x)$ and $T$ is Bernoulli. Note that this already solves the case $k=2$. Now consider an $F$-invariant $L_2$ function and use Fourier decomposition with respect to the $y_2$-coordinate to see that $F$ is ergodic (\ie the invariant function must be constant) if and only if the cohomological equation
$$
\xi(Tz)-\xi(z)=\beta(z)
$$
has a non-trivial solution $\xi$. Thus $F$ is ergodic if $\int\beta(z)dvol\neq 0$.

Recall that $\T^k$ acts on $E_h$ on the right by translation on the fiber. It is easy to see that $\rho\circ F$, $\rho\in\T^k$ is still an $A$-map and hence is volume preserving and partially hyperbolic. 
If $\int\beta(z)dvol\neq 0$ then consider
$$
F'=\rho\circ F,
$$
where $\rho=(0,\omega)\in\T^2\times\T^{k-2}$, $\omega\neq 0$.  In $(z,y_2)$-coordinates $F'$ takes the form
$$
F'(z,y_2)=(T(z), y_2+\beta(z)+\omega).
$$
Because $\int(\beta(z)+\omega) dvol=\omega\neq 0$ the diffeomorphism $F'$ is ergodic.

We have constructed partially hyperbolic diffeomorphisms on simply connected manifolds of dimension 6 to 26. To obtain higher dimensional examples one can couple these examples or couple them with sufficiently slow ergodic diffeomorphisms of spheres.

\section{Final remarks}
\subsection{The six dimensional example}
Note that our 6 dimensional example is in fact Bernoulli. It is also easy to see that it is stably non dynamically coherent. Indeed, a center leaf would cover $X$, hence, would be a trivial one-to-one cover and give a section of the bundle, but the bundle $E_h$ is non-trivial and, hence, does not admit sections.

\subsection{Real analytic version} We believe that our examples can be made real analytic by modifying the base diffeomorphism. More specifically one only needs to change the definition of $B_{\e,d}$ in the following way
$$
B_{\e,d}(x,y) =(13x-\e\sin(4d\pi x)+8y, 8x-\e\sin(4d\pi x)+5y), \;\;\;\e\ge 0, \;\;d\in\Z_+.
$$
One then has to work out a version of Lemma~\ref{lemma_cp1}. Note that calculations become tedious; in particular, because the cubic term of $B_{\e,d}$ at $(0,0)$ affects the dynamics on $\C P^1$.

\subsection{Bunching} By a more careful construction of the base diffeomorphism $f\colon X\to X$ one can obtain similar examples $F$ that are also $(2-\e)$-bunched; that is, for any $\e>0$ there exist a Riemannian metric $\|\cdot\|$ and $\lambda>1$ such that for any unit vectors, $v^s,v^c, v^u$ respectively in $E^s, E^c, E^u$ we have that  
\[\arraycolsep=1.4pt\def\arraystretch{1.6}
\begin{array}{rcccl}
& &\|DF(v^s)\|&\le& \lambda^{-2}\;\;\;\\
\lambda^{1+\e}\|DF(v^s)\|&<& \|DF(v^c)\|&<&\lambda^{-1-\e}\|DF(v^u)\|  \\
\lambda^2&\le &\|DF(v^u)\|&
\end{array}
\]

\subsection{2-connected example} It is easy to see from long exact sequence of the fiber bundle that, when $k=22$, our construction yields a partially diffeomorphism $F\colon E_h\to E_h$ of a simply connected, 2-connected, 26-dimensional manifold, \ie $\pi_1(E_h)=\pi_2(E_h)=0$.

\subsection{Irreducibility}
\label{sec_irreducible}
We recall a definition introduced in~\cite[Section 7]{FG}.
A partially hyperbolic diffeomorphism $F\colon N\to N$ is called {\it irreducible} if it verifies the following conditions:
\begin{enumerate}
 \item the diffeomorphism $F$ does not fiber over a (topologically) partially hyperbolic (or Anosov) diffeomorphism $\hat F\colon \hat N\to \hat N$ of a lower dimensional manifold $\hat N$; that is, one cannot find a fiber bundle $p\colon N\to \hat N$ and a (topologically) partially hyperbolic (or Anosov) diffeomorphism $\hat F\colon \hat N\to \hat N$ such that $p\circ F=\hat F\circ p$;
 \item if $F'$ is homotopic to $F$ then $F'$ also verifies 1;
 \item if $\tilde F$ is a finite cover of $F$ then $\tilde F$ also verifies 1 and 2. 
\end{enumerate}
\begin{conj}
 Our 6-dimensional example is irreducible.
\end{conj}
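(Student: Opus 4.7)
The plan is to verify all three clauses of irreducibility for $F:E_h\to E_h$. Clause~(3) is immediate: since $E_h$ is simply connected, every connected finite cover of $E_h$ equals $E_h$ itself, so (3) collapses to (1) and (2). Clause~(2) reduces to (1) because the obstructions I describe below are homotopy invariants of $F$ --- they involve only $F_*$ on $\pi_*(E_h)$ and its compatibility with topological factor data, both depending only on $[F]$. So the core task is clause~(1): rule out, for every $d=\dim \hat N\in\{2,3,4,5\}$, the existence of a fiber bundle $p:E_h\to \hat N$ and a topologically partially hyperbolic (or Anosov) $\hat F$ satisfying $p\circ F=\hat F\circ p$.

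If such $p$ exists, then $\pi_1(\hat N)=0$ (fibers are connected and $p_*$ is surjective on $\pi_1$). The cases $d=2,3,4$ are handled by standard topological obstructions:
\begin{itemize}
\item $d=2$: $\hat N=\S^2$, with $\chi(\S^2)=2\ne 0$, admits no line field and hence no PH/Anosov.
\item $d=3$: by the Poincar\'e conjecture $\hat N=\S^3$, excluded by Burago--Ivanov~\cite{BI}.
\item $d=4$: every simply connected closed 4-manifold has $\chi\ge 2$, which rules out any PH splitting (always containing a rank-1 summand, hence a line field). The $(2,2)$-Anosov case is excluded by noting that $\chi(E_h)=0=\chi(\text{fiber})\cdot \chi(\hat N)$ together with $\chi(\hat N)>0$ forces the fiber to be $\T^2$ or Klein bottle; then $\hat N$ is a simply connected closed 4-manifold with $\pi_2(\hat N)$ surjecting onto $\Z^2$, and a characteristic-class analysis of the splitting $T\hat N=E^s\oplus E^u$ into oriented rank-2 bundles, combined with the Hirzebruch signature formula and the required $\hat F_*$-action on $H^2(\hat N)=\Z^{22}$, yields the contradiction.
\end{itemize}

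The principal obstacle is $d=5$: the fiber of $p$ is $\S^1$ and there is no direct topological obstruction to $\hat N$ carrying PH. Here I exploit the PH splitting $1\oplus 4\oplus 1$ of $TE_h$ from Theorem~\ref{thm_partial_hyperbolicity}. The tangent direction to the $\S^1$-fibers of $p$ is a continuous $DF$-invariant rank-1 subbundle $L\subset TE_h$ whose integral curves are compact of uniformly bounded length. Since $E^s,E^u$ project to dense irrational linear foliations on the $\T^2$-fibers and $F$ contracts/expands lengths uniformly along them, $L$ cannot equal $E^s$ or $E^u$; hence $L\subset E^c=H'$, the 4-dimensional horizontal center complementary to $T\T^2$. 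Projecting via the isomorphism $D\pi_h|_{H'}:H'\xrightarrow{\sim}\pi_h^*(TX)$ yields a continuous family of lines in $T_xX$ parameterized by the $\T^2$-fiber over each $x\in X$. The main technical step --- and the principal obstacle in this plan --- is to show this family is constant on each $\T^2$-fiber and hence descends to a continuous line field on $X$, contradicting $\chi(X)=24$. The proposed approach is to use that $F$ is an $A$-map (so $FR_t=R_{A(t)}F$ with $A=B^2$ Anosov on $\T^2$) together with the ergodicity of $f$ on $X$: iterating $F$ and using the density of $A^n$-orbits should propagate the $F$-invariance of $L$ into $\T^2$-invariance after a suitable averaging, although making ``averaging of line fields'' rigorous --- likely via an accessibility or transverse-regularity property of $F$ --- is the delicate step.
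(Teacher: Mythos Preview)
The statement is labeled a \emph{Conjecture} in the paper and no proof is offered; it is left open. So there is no paper proof to compare against, and the relevant question is whether your outline actually settles the conjecture. It does not, and you essentially concede this yourself.

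The decisive gap is the one you flag in $d=5$. You need the $DF$-invariant line field $L=\ker Dp$ to be constant along the $\T^2$-fibers of $\pi_h$, so that it descends to a line field on $X$ and contradicts $\chi(X)=24$. But the hypothetical fibration $p$ bears no a~priori relation to the principal $\T^2$-structure on $E_h$, so the identity $F\circ R_t=R_{A(t)}\circ F$ tells you nothing about how $L$ transforms under $R_t$; ergodicity of $f$ on the base is likewise orthogonal to whether a section of $\pi_h^*(\mathbb P TX)$ is fiberwise constant. ``Averaging of line fields'' is not merely delicate --- it is not clear what statement you would even be proving. Note also that the $DF$-invariant horizontal bundle $H'$ produced in Theorem~\ref{thm_partial_hyperbolicity} has no reason to be $\T^2$-equivariant, so even the identification $H'\cong\pi_h^*(TX)$ you rely on need not respect the $\T^2$-action; and the paper's definition of irreducibility does not require $p$ to be smooth, in which case $\ker Dp$ is not even defined. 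Secondary gaps: the $(2,2)$-Anosov subcase of $d=4$ is dismissed with an unspecified ``characteristic-class analysis'', but ruling out codimension-two Anosov diffeomorphisms on a simply connected $4$-manifold is far from routine and would itself be progress on the Anosov--Smale classification problem; your reduction of clause~(2) to clause~(1) presumes that the partially hyperbolic splitting of $F$ persists for every $F'\simeq F$, which homotopy alone does not guarantee; and the connectedness of the fibers of $p$ (needed for $\pi_1(\hat N)=0$) is asserted rather than argued.
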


\subsection{A partially hyperbolic branched self-covering of $\S^3$}
Our construction can be applied to the Hopf bundle $\S^1\to\S^3\to\S^2$. Namely, consider the Latt\`es map of $\S^2$ induced by multiplication by $n$ on $\T^2$, $n\ge 2$. This is a rational map of degree $n^2$, which is self-covering outside of the ramification locus that consists of 4 points (see~\cite[$\mathsection 7$]{M} for a detailed description). Then, by working through the $A$-map machinery, one obtains a self map of $\S^3$ that covers the Latt\`es map and which is given by multiplication by $n^2$ in the $\S^1$ fibers. Further, by slowing down the Latt\`es map at the ramification points, one can obtain a partially hyperbolic branched self-covering of $\S^3$ of degree $n^4$. In fact, we can use a rational (non-Latt\`es) map of the base coming from Theorem~1 of~\cite{BE}. This map does not require further perturbation.

\end{document}